\title[Continuity of the stabilizer map]
  {Continuity of the stabilizer map and irreducible extensions}
\author{Adrien Le Boudec}
\address{CNRS, UMPA--ENS Lyon \\
  46 all\'ee d'Italie \\
  69364 Lyon \\
  France}
\email{adrien.le-boudec@ens-lyon.fr}
\author{Todor Tsankov}
\address{
  Institut Camille Jordan \\
  Universit\'e Claude Bernard Lyon 1 \\
  43, boulevard du 11 novembre 1918 \\
  69622 Villeurbanne \textsc{cedex} \\
  France
  -- and --
  Institut Universitaire de France}
\email{tsankov@math.univ-lyon1.fr}
\subjclass[2020]{Primary: 37B05. Secondary: 22D12, 06E15, 54H15.}
\keywords{Locally compact groups, stabilizer map, URS, Gleason complete flows.}
\setlist[enumerate,1]{label=(\roman*), font=\normalfont}
\DeclareMathOperator{\RO}{RO}
\DeclareMathOperator{\Sub}{Sub}
\DeclareMathOperator{\Stab}{Stab}
\DeclareMathOperator{\Sam}{Sa}
\begin{document}

\begin{abstract}
  Let $G$ be a locally compact group. For every $G$-flow $X$, one can consider the stabilizer map $x \mapsto G_x$, from $X$ to the space $\mathrm{Sub}(G)$ of closed subgroups of $G$. This map is not continuous in general. We prove that if one passes from $X$ to the universal irreducible extension of $X$, the stabilizer map becomes continuous. This result provides, in particular, a common generalization of a theorem of Frolík (that the set of fixed points of a homeomorphism of an extremally disconnected compact space is open) and a theorem of Veech (that the action of a locally compact group on its greatest ambit is free). It also allows to naturally associate to every $G$-flow $X$ a stabilizer $G$-flow $\mathrm{S}_G(X)$ in the space  $\mathrm{Sub}(G)$, which
  generalizes the notion of stabilizer uniformly recurrent subgroup associated to a minimal $G$-flow introduced by Glasner and Weiss. 
\end{abstract}

\maketitle

\section{Introduction}

Let $G$ be a topological group. Recall that a \df{$G$-flow} is a continuous action $G \actson X$ on a compact space $X$ (all our compact spaces are Hausdorff). A $G$-flow is \df{minimal} if every orbit is dense. A continuous, $G$-equivariant map $\pi \colon Y \to X$ between $G$-flows is called a \df{$G$-map}. If $\pi$ is surjective, we also say that $Y$ is an \df{extension} of $X$, or that $X$ is a \df{factor} of $Y$.

A map $\pi \colon Y \to X$ between compact spaces is called \df{irreducible} if every non-empty open $U \sub Y$ contains the fiber $\pi^{-1}(\set{x})$ for some $x \in X$, or, equivalently, if the image of any proper closed subset of $Y$ is a proper subset of $X$. Irreducible maps were studied by Gleason~\cite{Gleason1958}, who proved that to every compact space $X$, one can associate an extremally disconnected compact space $\hat X$, the Stone space of the Boolean algebra $\RO(X)$ of regular open subsets of $X$, with an irreducible map $\hat X \to X$ which is universal with respect to irreducible maps $Y \to X$. Recall that a space is \df{extremally disconnected} if the closure of every open subset is clopen. 

An extension $\pi \colon Y \to X$ between $G$-flows is called \df{irreducible} if $\pi$ is irreducible as a map between topological spaces. The extension $\pi$ is called \df{highly proximal} if one can compress any fiber of $\pi$ to a point by applying elements of $G$; more precisely, if for every $x \in X$ there exists a net $(g_i)$ of elements of $G$ such that $g_i \cdot \pi^{-1}(\set{x})$ converges to a singleton in the Vietoris topology on the closed subsets of $Y$. These notions were studied by Auslander and Glasner in \cite{Auslander1977} where it was proved that they are equivalent if $X$ and $Y$ are minimal. However, they are different if $Y$ is not minimal (cf. Example~\ref{ex:highly-prox}) and in this paper, we will mostly be interested in the notion of an irreducible extension. Irreducible extensions are thought as being rather small extensions and they preserve many dynamical properties such as minimality, proximality, strong proximality, and disjointness. When the spaces $Y$ and $X$ are metrizable, an extension $\pi \colon Y \to X$ is irreducible iff it is \df{almost one-to-one} (i.e., the set $\set[\big]{y \in Y : \pi^{-1}(\set{\pi(y)}) = \set{y}}$ is dense in $Y$). Almost one-to-one extensions are an important tool in topological dynamics (used, for example to construct symbolic representations of continuous systems), and the notion of an irreducible extension is the appropriate generalization that allows the existence of universal objects and the development of a general theory.

For every $G$-flow $X$, there exists a $G$-flow $\hat X_G$ and an irreducible extension $\pi_X \colon \hat X_G \to X$ with the following universal property: for every irreducible extension $\pi \colon Y \to X$, there exists a $G$-map $p \colon \hat X_G \to Y$ such that $\pi  \circ p = \pi_X$. Moreover, $\hat X_G$ is unique up to isomorphism. It is called the \df{universal irreducible extension} of $X$. For minimal flows, the existence and uniqueness of $\hat X_G$ were established in \cite{Auslander1977} and the general case is due to Zucker~\cite{Zucker2021}. In \cite{Zucker2021}, following the terminology of \cite{Auslander1977} for minimal flows, this extension was called the \df{universal highly proximal extension}; however, in view of the non-equivalence of high proximality and irreducibility for extensions of non-minimal flows and the fact that an irreducible extension is not necessarily proximal (for example, for actions of the trivial group), we prefer to use different names for the two notions.

The universal irreducible extension can be viewed as a type of completion (cf. Section~\ref{sec:univ-irred-extens}), so we call a $G$-flow $X$ \df{Gleason complete} if $\hat X_G = X$. Equivalently, $X$ is Gleason complete if $X$ admits no non-trivial irreducible extensions. The correspondence $X \mapsto \hat X_G$ is idempotent and its image is the class of Gleason complete $G$-flows. Thus the class of $G$-flows is partitioned into equivalence classes, where $X$ and $Y$ are equivalent if they admit a common irreducible extension; or equivalently if $\hat X_G$ and $\hat Y_G$ are isomorphic. Each class contains a unique representative that is Gleason complete.

If $X$ is minimal, being Gleason complete is equivalent to being \df{maximally highly proximal} in the sense of \cite{Auslander1977}. In \cite{Zucker2021}, the term \df{maximally highly proximal}, or \df{MHP} (cf. \cite{Zucker2021}*{Proposition~3.5}) is used even for non-minimal flows with the same meaning as our \emph{Gleason complete}.

For discrete groups, the construction of $\hat X_G$ reduces to the one by Gleason, and we have that $\hat X_G = \hat X$ \cite[Th.\ 3.2]{Gleason1958}. In this setting, a $G$-flow $X$ is Gleason complete iff it is extremally disconnected. This depends only on the topology of $X$, and not on $G$. This is no longer true for non-discrete groups. Examples of Gleason complete flows that arise in the non-discrete setting are $X = G/H$, where $H$ is a closed, cocompact subgroup of $G$, and $G$ acts on $X$ by left translations. Gleason complete flows of Polish groups were extensively studied by Zucker (under the name of MHP flows) in \cite{Zucker2021}, where many more interesting examples can be found. More general topological groups were considered by Basso and Zucker in \cite{Basso2021a}. 

The equivalence relation of having the same universal irreducible extension and the notion of Gleason complete flow are useful to express certain rigidity properties among $G$-flows. An instance of this is a theorem of Rubin that asserts that any two $G$-flows that are faithful and \df{micro-supported} have a common irreducible extension \cite{Rubin-loc-mov-book}. Combined with \cite[Prop.~2.3]{Caprace2023}, this implies that every group $G$ that admits a faithful micro-supported $G$-flow admits exactly one faithful micro-supported $G$-flow that is Gleason complete. For certain non-discrete, totally disconnected locally compact groups, this flow is the Stone space of the centralizer lattice of $G$, a Boolean algebra constructed from the local structure of the group \cite[Th.~II]{CRW-part1}, \cite{Caprace2023}. See the references above for the definition of a ``micro-supported'' action and more details.

\subsection*{The main result}

In certain contexts, Gleason complete flows are better behaved than general flows. The main result of this paper is an illustration of such a situation. For the remainder of the introduction, we suppose that $G$ is a locally compact group, and we denote by $\Sub(G)$ the space of closed subgroups of $G$. Endowed with the Chabauty topology, the space $\Sub(G)$ is compact, and the action of $G$ on $\Sub(G)$ by conjugation is continuous. To every $G$-flow $X$, we can associate the stabilizer map $X \to \Sub(G)$, $x \mapsto G_x$, which is $G$-equivariant. The stabilizer map is always upper semi-continuous (see, e.g., \cite{Glasner2015}), but fails to be continuous in general. This lack of continuity is not just a technical issue, but is an intrinsic property of the flow. For instance, it witnesses the difference between free and topologically free actions (see below). We show that for Gleason complete flows, this defect disappears.

\begin{theorem}
  \label{thm-stab-cont-intro}
	Let $G$ be a locally compact group and let $X$ be a Gleason complete $G$-flow. Then the stabilizer map $X \to \mathrm{Sub}(G)$, $x \mapsto G_x$, is continuous.
\end{theorem}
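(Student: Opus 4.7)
The stabilizer map is upper semi-continuous as noted in the introduction, so the content of the theorem is lower semi-continuity, which amounts to showing that the graph $\Gamma := \{(G_x, x) : x \in X\}$ is closed in $\Sub(G) \times X$.

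The plan is to consider the closure $Y := \overline{\Gamma}$ inside $\Sub(G) \times X$, equipped with the diagonal $G$-action (conjugation on $\Sub(G)$ and the given action on $X$). Then $Y$ is a $G$-flow and the second projection $\pi \colon Y \to X$ is a continuous $G$-equivariant surjection; continuity of the stabilizer map is equivalent to $Y = \Gamma$, i.e.\ to $\pi$ being a homeomorphism. I would argue that $\pi$ is an irreducible extension. Gleason completeness of $X$ then provides, via the universal property of $\hat X_G = X$, a $G$-map $p \colon X \to Y$ with $\pi \circ p = \mathrm{id}_X$; the closed subset $p(X) \subseteq Y$ surjects onto $X$, so irreducibility of $\pi$ forces $p(X) = Y$, making $\pi$ a continuous bijection between compact Hausdorff spaces and hence a homeomorphism.

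The heart of the argument is thus the irreducibility of $\pi$. Suppose for contradiction that $F \subsetneq Y$ is proper closed with $\pi(F) = X$; then there exists a non-empty basic open box $(V \times W) \cap Y \subseteq Y \setminus F$, with $V \subseteq \Sub(G)$ and $W \subseteq X$ open. Density of $\Gamma$ in $Y$ yields $x_1 \in W$ with $G_{x_1} \in V$, while $\pi(F) \ni x_1$ forces some $(H, x_1) \in F$ with $H \notin V$; so the stabilizer map has several Chabauty limit values at $x_1$. To produce a contradiction, one argues that continuity points of the stabilizer map meet $\{x \in W : G_x \in V\}$: at such a continuity point $x_c$, the fiber $\pi^{-1}(x_c)$ is the singleton $\{(G_{x_c}, x_c)\}$, which lies in $V \times W$, contradicting $F \cap \pi^{-1}(x_c) \neq \emptyset$ and $F \cap (V \times W) = \emptyset$. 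When $G$ is second-countable, $\Sub(G)$ is metrizable and the Baire category theorem applied to the upper semi-continuous stabilizer map yields a dense $G_\delta$ of continuity points in $X$; a refinement (shrinking $V$ around $G_{x_1}$ and exploiting Chabauty-continuity locally) should locate one in the required set. The main obstacle I anticipate is the case of non-second-countable $G$: this likely requires either reducing to the second-countable setting by passing to a suitable quotient, or exploiting the Boolean-algebraic description of $\hat X_G = X$ through a $G$-subalgebra of $\RO(X)$ in order to produce the required continuity points directly.
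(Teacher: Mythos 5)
Your reduction is sound as far as it goes: if the projection $\pi \colon Y \to X$ from $Y = \overline{\Gamma} \subseteq \Sub(G) \times X$ were an irreducible extension, then Gleason completeness would give a section $p$ with $\pi \circ p = \mathrm{id}_X$, irreducibility would force $p(X) = Y$, and $\Gamma$ would be closed, which is exactly lower semi-continuity. The gap is that the irreducibility of $\pi$ is not a general fact, and your sketch of its proof never actually invokes Gleason completeness, so it cannot be correct. Concretely, let $G = \Z$ act on the one-point compactification $X = \Z \cup \{\infty\}$ by translation. Then $Y = \{(\{0\}, x) : x \in X\} \cup \{(\Z, \infty)\}$, the set $\bigl(\{H : H \cap \{1\} \neq \emptyset\} \times X\bigr) \cap Y = \{(\Z, \infty)\}$ is a non-empty open subset of $Y$ containing no fiber, and $\pi$ is not irreducible. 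More importantly, the step you rely on --- that the continuity points of $\Stab$ meet $S = \{x \in W : G_x \in V\}$ --- is circular: in this example (with $V = \{H : H \cap \{1\} \neq \emptyset\}$ and $W \ni \infty$) the set $S$ is the single point $\{\infty\}$, and no Baire category argument in $X$ can place a continuity point inside $S$ unless $S$ has non-empty interior, which is precisely the lower semi-continuity of $\Stab$ at $x_1$ that you are trying to prove. The sets $\{x : G_x \cap O \neq \emptyset\}$ for $O$ open are exactly the ones whose openness is at issue; they are in general neither open nor comeager in any open set, so ``shrinking $V$'' cannot help.

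What is missing is the mechanism by which Gleason completeness forces these sets to be open, and this is where essentially all of the work in the paper lies. The paper fixes $g$ with $g \cdot x_0 = x_0$, reduces to a $\sigma$-compact open subgroup carrying a proper normal pseudo-norm (this also disposes of your worry about non-second-countable $G$, via Corollary~\ref{cor-Gleason complete-localcondition}), and then constructs in Lemma~\ref{l:construct-function} finitely many \emph{continuous} functions $\phi_i(x) = \min(\dtp(x, \overline{U_i}), r)$ --- continuity being supplied by the characterization of Gleason complete flows in condition \ref{item-cont-dist} of Theorem~\ref{th:char-distance}, together with Corollary~\ref{c:d-closure} and a Zorn's lemma/pigeonhole construction of the sets $U_i$ --- such that $\|\phi(g \cdot x) - \phi(x)\|_\infty < r/3$ forces $\dtp(g \cdot x, x) \leq r$, i.e.\ $x \in B_{2r} g \cdot x$. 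The open set $\{x : \|\phi(g \cdot x) - \phi(x)\|_\infty < r/3\}$ is then the neighborhood of $x_0$ witnessing lower semi-continuity. Some such quantitative construction of continuous witnesses, exploiting the topometric structure of Gleason complete flows, appears unavoidable, and your outline contains no substitute for it.
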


If $X$ is any $G$-flow, the theorem applies to the Gleason complete flow $\hat X_G$, and shows that taking an irreducible extension of $X$ is enough to resolve the continuity issue of the stabilizer map on $X$. 

As mentioned above, when $G$ is a discrete group, $X$ is Gleason complete if and only if $X$ is extremally disconnected. In that case, Theorem~\ref{thm-stab-cont-intro} is equivalent to saying that the set of fixed points in $X$ of every element $g \in G$ is an open subset of $X$. This is a theorem of Frol\'{\i}k~\cite{Frolik1968}.

Another special case of Theorem~\ref{thm-stab-cont-intro} is a well-known theorem of Veech that the action of a locally compact group on its greatest ambit $\Sam(G)$ is free. One can apply Theorem~\ref{thm-stab-cont-intro} because the greatest ambit is a Gleason complete flow and the free left translation action $G \actson G$ embeds into it densely (cf. Corollary~\ref{c:Veech}). A relativized version of Veech's theorem was considered by Matte Bon and Tsankov in \cite{MatteBon2020}, where it was proved that the stabilizer map for the flow $\Sam(G/H)$ (the Samuel compactification of $G/H$), where $H$ is a closed subgroup of $G$, is continuous. This is again a special case of Theorem~\ref{thm-stab-cont-intro} because the flow $\Sam(G/H)$ is also Gleason complete \cite{Zucker2021}.

As Theorem~\ref{thm-stab-cont-intro} is a common generalization of Frol\'{\i}k's and Veech's theorem, it is perhaps not surprising that its proof mixes ideas from the proofs of both. We also rely on the topometric structure on Gleason complete flows introduced by Zucker~\cite{Zucker2021} (extending a construction of \cite{BenYaacov2017} for $\Sam(G)$), which while being rather simple for locally compact groups, is still useful for us.

\subsection*{Freeness vs topological freeness}

Recall that $G \actson X$ is \df{free} if $G_x$ is trivial for every $x \in X$, and $G \actson X$ is called \df{topologically free} if for every compact $K \sub G$ with $1_G \notin K$, the closed set $\set{x \in X : x \in K \cdot x}$ has empty interior. (When $G$ is second countable, topological freeness is equivalent to saying that there is a dense set of points $x \in X$ such that $G_x$ is trivial.) The difference between freeness and topological freeness is detected by the failure of continuity of the stabilizer map: a topologically free action is free if and only if the stabilizer map is continuous. Also, the property of being topologically free is preserved under irreducible extensions in both directions. Hence the following is a consequence of Theorem~\ref{thm-stab-cont-intro}.

\begin{cor}
	\label{c:intro-top-free}
	Let $G$ be a locally compact group, and let $X$ be a $G$-flow. Then the following are equivalent:
	\begin{enumerate}
		\item $X$ is topologically free;
		\item  $\hat X_G$ is free.
	\end{enumerate}
	In particular, a Gleason complete flow is topologically free if and only if it is free.
\end{cor}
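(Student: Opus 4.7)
The plan is to first establish the final ``in particular'' assertion directly from Theorem~\ref{thm-stab-cont-intro}, and then to deduce the equivalence (i) $\iff$ (ii) by invoking the fact, already stated in the introduction, that topological freeness is preserved by irreducible extensions in both directions.

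For the Gleason complete case, freeness trivially implies topological freeness, so the content lies in the converse. I would assume that $X$ is Gleason complete and topologically free but not free, and pick $x \in X$ together with some $g \in G_x$, $g \neq 1_G$. The plan is to choose an open neighborhood $U$ of $g$ in $G$ with compact closure $K$ disjoint from $\set{1_G}$, and to observe that $\set{H \in \Sub(G) : H \cap U \neq \emptyset}$ is a basic Chabauty-open neighborhood of $G_x$. By Theorem~\ref{thm-stab-cont-intro}, the stabilizer map is continuous, so there exists an open neighborhood $V$ of $x$ such that $G_y \cap U \neq \emptyset$ for every $y \in V$. Picking $h_y \in G_y \cap K$ gives $y = h_y \cdot y \in K \cdot y$, so $V \sub \set{y \in X : y \in K \cdot y}$. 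Since $V$ is a non-empty open set and $1_G \notin K$, this contradicts topological freeness.

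For the equivalence, I would apply the preservation of topological freeness under irreducible extensions to the universal irreducible extension $\pi_X \colon \hat X_G \to X$, which yields that $X$ is topologically free iff $\hat X_G$ is. Combined with the ``in particular'' statement applied to the Gleason complete flow $\hat X_G$, this immediately gives (i) $\iff$ (ii). The only substantive input anywhere is Theorem~\ref{thm-stab-cont-intro}; once continuity of the stabilizer map is available, the rest is a direct unpacking of the definitions of the Chabauty topology and of topological freeness, so I do not anticipate any real obstacle beyond keeping track of the standard Chabauty neighborhood basis.
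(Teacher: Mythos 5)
Your argument for the ``in particular'' statement is correct and is essentially the paper's own: the proof of (i)~$\Rightarrow$~(ii) in Corollary~\ref{c:top-free} observes that for $g \neq 1_G$ and a relatively compact open $V \ni g$ with $1_G \notin \overline{V}$, the set $\{z : z \in V\cdot z\}$ is open by continuity of the stabilizer map and hence empty by topological freeness; your unpacking via the basic Chabauty neighborhood $\{H \in \Sub(G) : H \cap U \neq \emptyset\}$ is the same computation. The reduction of (i)~$\Rightarrow$~(ii) to the Gleason complete case is also fine, because the direction of the preservation claim you need there --- topological freeness of $X$ passes \emph{up} to the irreducible extension $\hat X_G$ --- really is elementary: if $\{z : z \in K\cdot z\}$ contained a non-empty open set $U$, the fiber image $\pi_*(U)$ would be a non-empty open subset of $\{x : x \in K\cdot x\}$.

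The gap is in (ii)~$\Rightarrow$~(i), where you need the opposite direction: if $X$ is \emph{not} topologically free, then $\hat X_G$ is not topologically free (hence not free). This is not a direct unpacking of the definitions. Given a non-empty open $W \subseteq X$ with $x \in K\cdot x$ for all $x \in W$, a point $z \in \pi^{-1}(W)$ only satisfies $\pi(z) \in K\cdot \pi(z)$; an element of $K$ fixing $\pi(z)$ merely permutes the fiber over $\pi(z)$ and need not fix $z$, and a compact set of homeomorphisms of the fiber can act without any fixed point. The paper bridges this with Proposition~\ref{p:stab-flow}\ref{i:psf:nwdense}: the set $D_K = \{z \in \hat X_G : z \notin K\cdot z \text{ and } \pi(z) \in K\cdot\pi(z)\}$ is nowhere dense (proved by applying the fiber-image trick to a neighborhood $U'$ of a putative interior point with $K U' \cap U' = \emptyset$), so that $\pi^{-1}(W) \setminus \overline{D_K}$ is a non-empty open set consisting of points $z$ with $z \in K\cdot z$. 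Your appeal to the introduction's sentence that topological freeness is preserved ``in both directions'' invokes exactly this statement without proof, and your remark that the only substantive input is Theorem~\ref{thm-stab-cont-intro} underestimates this step: to complete the argument you should supply the $D_K$ nowhere-density lemma or an equivalent.
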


This has the following application. Recall that a $G$-flow is called \df{strongly proximal} if the closure of the $G$-orbit of every Borel probability measure on $X$ contains a Dirac measure. The flow $X$ is called a \df{boundary} if $X$ is minimal and strongly proximal. Every group $G$ admits a boundary $\partial_F G$, unique up to isomorphism, such that every boundary is a factor of $\partial_F G$ \cite[\S III]{Glasner1976}. It is called the \df{Furstenberg boundary} of $G$. By \cite[Lemma 5.2]{Glasner1975} and \cite[Lemma 4.1]{Glasner1976}, the flow $\partial_F G$ is Gleason complete.

\begin{cor} \label{cor-free-bnd}
  For every locally compact group $G$, the stabilizer map is continuous on $\partial_F G$. In particular the following are equivalent:
  \begin{enumerate}
		\item $G$ admits a topologically free boundary;
		\item $G$ acts freely on $\partial_F G$.
	\end{enumerate}
\end{cor}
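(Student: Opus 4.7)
The continuity assertion is immediate: the excerpt records that $\partial_F G$ is Gleason complete (by Glasner's results \cite{Glasner1975}, \cite{Glasner1976}), so Theorem~\ref{thm-stab-cont-intro} applies to it verbatim and yields continuity of $x \mapsto G_x$ on $\partial_F G$. The equivalence then reduces to two observations, of which only one requires work. The implication (ii)$\Rightarrow$(i) is trivial, since freeness implies topological freeness and $\partial_F G$ is itself a boundary, so it witnesses (i).

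For the nontrivial direction (i)$\Rightarrow$(ii), I would start from a topologically free boundary $X$ and apply Corollary~\ref{c:intro-top-free} to conclude that the universal irreducible extension $\hat X_G$ acts freely. Next I would observe that $\hat X_G$ is still a boundary: irreducible extensions preserve minimality and strong proximality (as recalled in the introduction), so $\hat X_G$ is minimal and strongly proximal. By the universal property of the Furstenberg boundary, there is then a $G$-equivariant surjection $\pi \colon \partial_F G \to \hat X_G$. Since $G$-equivariance forces the elementary inclusion $G_y \subseteq G_{\pi(y)}$ for every $y \in \partial_F G$, and the right-hand side is trivial by freeness of $\hat X_G$, it follows that $G_y = \{1_G\}$ for all $y$, i.e., $G \actson \partial_F G$ is free.

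There is no real obstacle here; the only step that uses nontrivial content is passing from $X$ to $\hat X_G$ via Corollary~\ref{c:intro-top-free} (and hence ultimately via the main theorem) and remembering that the class of boundaries is stable under irreducible extensions. Everything else is a direct application of the universal property of $\partial_F G$ and the monotonicity of stabilizers under factor maps.
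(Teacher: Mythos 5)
Your proof is correct, and the first assertion and the direction (ii)$\Rightarrow$(i) match the paper exactly. For (i)$\Rightarrow$(ii) you take a genuinely different route. The paper argues ``downstream'': given a topologically free boundary $X$, it uses the factor map $\partial_F G \to X$ to lift topological freeness to $\partial_F G$ (images of open sets under factor maps of minimal flows are somewhere dense, so a witness to non-topological-freeness would push down to $X$), and then applies Corollary~\ref{c:intro-top-free} directly to the Gleason complete flow $\partial_F G$. You instead go ``upstream'': you apply Corollary~\ref{c:intro-top-free} to $X$ itself to get that $\hat X_G$ is free, note that $\hat X_G$ is again a boundary, and then transfer freeness to $\partial_F G$ along the factor map $\partial_F G \to \hat X_G$ via the inclusion $G_y \subseteq G_{\pi(y)}$. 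Both arguments are sound; the trade-off is that the paper's version needs only the elementary fact that topological freeness lifts along factor maps of minimal flows, whereas yours leans on the preservation of strong proximality under irreducible (equivalently, for minimal flows, highly proximal) extensions --- a true statement, asserted in the introduction with reference to \cite{Auslander1977}, but a somewhat heavier input than what the paper's proof uses. In exchange, your final step (monotonicity of stabilizers under a factor map out of $\partial_F G$) is completely elementary and avoids having to discuss how topological freeness behaves under extensions at all.
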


\begin{proof}
The first assertion follows from the fact that  $\partial_F G$ is Gleason complete and Theorem~\ref{thm-stab-cont-intro}. For the second assertion, if $G$ admits a topologically free boundary $G \actson X$, then the action of $G \actson \partial_F G$ is also topologically free since there is a factor map $\partial_F G \to X$. Since $\partial_F G$ is Gleason complete, Corollary~\ref{c:intro-top-free} implies that $G \actson \partial_F G$ is free. The other direction is clear.
\end{proof}

When $G$ is a discrete group, the equivalence in Corollary \ref{cor-free-bnd} was already known as it follows from \cite{Frolik1968}. Whether this property holds true in a given group $G$ was recently shown to be equivalent to the simplicity of the reduced $\mathrm{C}^\ast$-algebra of $G$ \cite{KK}. It is not known if this equivalence holds more generally for locally compact groups. See \cite[Sec.\ 6]{CKM-typeI} for a discussion of this problem (where the points of $\partial_F G$ where the stabilizer map is continuous are also considered).

\subsection*{Stabilizer flows}

Theorem~\ref{thm-stab-cont-intro} is interesting beyond the case of topologically free actions. Recall that a \df{uniformly recurrent subgroup (URS)} of a locally compact group $G$ is a minimal closed, $G$-invariant subset of $\Sub(G)$ \cite{Glasner2015}. Every minimal $G$-flow $X$ gives rise to a URS of $G$, called the \df{stabilizer URS} associated to $X$, defined as the unique minimal closed $G$-invariant subset of the closure of the image of the stabilizer map in $\mathrm{Sub}(G)$ (Glasner--Weiss~\cite{Glasner2015}). (Although \cite{Glasner2015} makes the standing assumption that $G$ is second countable, this fact holds for every locally compact group and every minimal $G$-flow, see Section \ref{sec:stabilizer-flows} for details.)

Theorem~\ref{thm-stab-cont-intro} allows us to associate a \df{stabilizer flow} to any $G$-flow $X$, without a minimality assumption: we consider the Gleason complete flow $\hat X_G$, and simply take the image of $\hat X_G$ in $\Sub(G)$ by the stabilizer map (cf. Definition~\ref{df:stabilizer-flow}). By definition, the stabilizer flow is an invariant under taking irreducible extensions.
In Section~\ref{sec:stabilizer-flows}, we prove some of its basic properties. We show, in particular, that when $X$ is minimal, the stabilizer flow and the stabilizer URS are equal. 

\begin{cor}
	\label{c:intro-stab-flow-equal-URS}
Let $G$ be a locally compact group, and let $X$ be a minimal $G$-flow. Then the stabilizer URS of $X$ is equal to $\set{G_z : z \in \hat X_G}$.
\end{cor}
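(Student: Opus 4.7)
The plan is to observe that $S := \set{G_z : z \in \hat X_G}$ is a URS, show that it is contained in $\overline{\set{G_x : x \in X}}$, and then invoke the Glasner--Weiss uniqueness of the minimal $G$-invariant subset of this closure to identify $S$ with the stabilizer URS of $X$.

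First, irreducible extensions preserve minimality, so $\hat X_G$ is minimal; Theorem~\ref{thm-stab-cont-intro} then gives that the stabilizer map $\alpha \colon \hat X_G \to \Sub(G)$ is continuous, and its image $S$ is a minimal $G$-invariant compact subset of $\Sub(G)$, i.e., a URS. Since two minimal subsets of $\Sub(G)$ either coincide or are disjoint, to identify $S$ with the stabilizer URS it suffices to check $S \sub \overline{\set{G_x : x \in X}}$.

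For the inclusion, I would use the dense $G$-invariant $G_\delta$ set $X_c \sub X$ of continuity points of the upper semi-continuous map $x \mapsto G_x$. Its preimage $\pi^{-1}(X_c)$ is dense in $\hat X_G$: indeed, for any nonempty open $U \sub \hat X_G$, the set $\set{x \in X : \pi^{-1}(x) \sub U}$ is open (its complement $\pi(\hat X_G \setminus U)$ is compact, so closed) and nonempty by irreducibility of $\pi$, and hence meets $X_c$. The key sublemma is then that $G_{z_0} = G_{x_0}$ for every $x_0 \in X_c$ and every $z_0 \in \pi^{-1}(x_0)$. Granted this, given any $z \in \hat X_G$ and a net $z_i \to z$ with $z_i \in \pi^{-1}(X_c)$, continuity of $\alpha$ yields $G_z = \lim G_{z_i} = \lim G_{\pi(z_i)}$, exhibiting $G_z$ in the closure of $\set{G_x : x \in X}$.

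The main obstacle is the sublemma $G_{z_0} = G_{x_0}$ at $z_0 \in \pi^{-1}(x_0)$ with $x_0 \in X_c$. The inclusion $G_{z_0} \sub G_{x_0}$ is immediate; the reverse asks that any $g \in G_{x_0}$ fixes every point of the fibre $\pi^{-1}(x_0)$. In the discrete case, Chabauty continuity of the stabilizer at $x_0$ means that $g$ pointwise fixes an open neighbourhood of $x_0$ in $X$, and the conclusion follows from a direct manipulation with the Boolean algebra $\RO(X)$ underlying $\hat X_G$, in Gleason's spirit. For a general locally compact $G$, Chabauty continuity is strictly weaker, and the argument is expected to use, in addition to Theorem~\ref{thm-stab-cont-intro}, the topometric structure on $\hat X_G$ imported from Zucker's work (already used to prove Theorem~\ref{thm-stab-cont-intro}), in order to propagate the stabilizing behaviour from $X$ into each fibre of $\pi$.
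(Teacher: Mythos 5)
Your overall skeleton agrees with the paper's: show that $S=\Stab(\hat X_G)$ is a minimal subflow of $\Sub(G)$, show $S\subseteq\overline{\Stab(X)}$, and invoke the uniqueness of the minimal subflow of $\overline{\Stab(X)}$. The first and third steps are fine. The gap is in the second step, and it is twofold. First, your argument rests on the density in $X$ of the set $X_c$ of continuity points of $x\mapsto G_x$. That density is guaranteed (Lemma~\ref{lem-set-pts-cont}) only when $G$ is second countable; for a general locally compact $G$ it can fail badly. The paper's own remark after Corollary~\ref{c:second-countable} exhibits $\mathrm{SO}(3,\R)$ with the discrete topology acting on $\mathbf{S}^2$ --- a \emph{minimal} flow --- for which the set of continuity points is empty, so your net $z_i\in\pi^{-1}(X_c)$ does not exist and the argument collapses exactly in the generality the corollary claims. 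Second, even where $X_c$ is dense, the sublemma $G_{z_0}=G_{x_0}$ for $x_0\in X_c$ and $z_0\in\pi^{-1}(x_0)$ carries the entire weight of the inclusion, and you do not prove it: you sketch the discrete case and say the general case is ``expected to'' follow from the topometric structure. As it stands this is a conjecture, not a proof.

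The paper's route (Proposition~\ref{p:stab-flow} \ref{i:psf:in-TG}) avoids continuity points on $X$ altogether and proves the stronger statement $\Stab(\hat X_G)\subseteq\overline{\Stab(X')}$ for \emph{every} dense $X'\subseteq X$. The trick is to fix a basic Chabauty neighbourhood $\mathcal{U}=\{H: H\cap O_i\neq\emptyset\ \text{for all } i,\ H\cap K=\emptyset\}$ of $G_{z_0}$ and only then choose the witness $x\in X'$: one shows that $D_K=\{z\in\hat X_G: z\notin K\cdot z\text{ and }\pi(z)\in K\cdot\pi(z)\}$ is nowhere dense, takes the open set $U=\{z:G_z\in\mathcal{U}\}$ (open by Theorem~\ref{thm-stab-cont-mhp}), and picks $x\in\pi_*(U\setminus\overline{D_K})\cap X'$; then $G_x\cap K=\emptyset$ because $x=\pi(z)$ for some $z\notin D_K$, while $G_z\leq G_x$ gives $G_x\cap O_i\neq\emptyset$. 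The compact set $K$ is tailored to the neighbourhood, so one never needs a single point $x$ whose stabilizer approximates $G_{z_0}$ in all neighbourhoods simultaneously. If you want to salvage your approach, you must either restrict to second countable $G$ or replace the global sublemma by this neighbourhood-by-neighbourhood argument.
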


In the special case where $X = \hat X_G = \Sam(G/H)$ for some closed subgroup $H \leq G$ belonging to an URS of $G$, Corollary~\ref{c:intro-stab-flow-equal-URS} is equivalent to \cite[Proposition 2.8]{MatteBon2020}, which was used there to prove that every URS of $G$ can be realized as the stabilizer URS of some minimal flow.

\subsection*{Acknowledgments}
We would like to thank Nicolás Matte Bon for interesting discussions on the topic of this work, and Colin Reid and Pierre-Emmanuel Caprace for useful comments on a preliminary version of the article. We are grateful to Andy Zucker for bringing to our attention the inconsistency of the terminology ``maximally highly proximal'' in the case of non-minimal flows and suggesting a change. We are also grateful to the referee for a careful reading of the paper and several remarks and corrections. Work on this paper was partially supported by the ANR project AGRUME (ANR-17-CE40-0026).

\section{The universal irreducible extension of a $G$-flow}
\label{sec:univ-irred-extens}

In this section, we give a new construction of the universal irreducible extension of a $G$-flow $G \actson X$, where $G$ is an arbitrary topological group. The existence of such an extension was proved by Auslander and Glasner \cite{Auslander1977} for minimal flows using an abstract argument and a construction without a minimality assumption, in terms of near-ultrafilters, was given by Zucker~\cite{Zucker2018} for Polish groups and Basso and Zucker~\cite{Basso2021a} for arbitrary topological groups. Our construction is in some sense dual to theirs: instead of constructing the points of $\hat X_G$ directly, we describe the lattice of continuous functions $C(\hat X_G)$ and use an appropriate duality theorem to recover the space.

Before describing the construction, we give an example which illustrates that irreducible and highly proximal extensions are distinct notions, even when the target flow is minimal. 

\begin{example}
  \label{ex:highly-prox}
  Consider the irrational rotation $R \colon \bT \to \bT$ given by $R(x) = x + \alpha$. We construct an extension by doubling the orbit of $0$ as follows. Let $X = \bT \sqcup \set{a_n : n \in \Z}$ and define a metric $d$ on $X$ by setting the distance $d(a_n, n \alpha) = 2^{-|n|}$ and extending it to all of $X$ by taking the shortest path metric, i.e., $d(a_m, a_n) = |n \alpha  - m \alpha| + 2^{-|n|} + 2^{-|m|}$ for $m \neq n$, and $d(a_n, x) = |n \alpha - x| + 2^{-|n|}$ for $x \in \bT$. Then $(X, d)$ is a compact metric space and the map $R$ extends to a homeomorphism of $X$ by setting $R(a_n) = a_{n+1}$ for all $n \in \Z$. The extension map $X \to \bT$ is given by the identity on $\bT$ and $a_n \mapsto n \alpha$. Then one easily checks that this extension is highly proximal but of course it is not irreducible because the points $a_n$ are isolated.
\end{example}

\subsection{The non-archimedean case}
\label{sec:non-archimedean-case}

A Boolean algebra is called \df{complete} if it admits suprema (and infima) of arbitrary subsets. A Boolean algebra $\cB$ is complete iff its Stone space $\tS(\cB)$ is \df{extremally disconnected}, i.e., for every open $U \sub \tS(\cB)$, the set $\cl{U}$ is also open. If $\set{A_i}_{i \in I}$ is a family of clopen sets in $\tS(\cB)$, their supremum in $\cB$ is the clopen set $\cl{\bigcup_i A_i}$.

An open subset $U$ of a topological space $X$ is called \df{regular} if $U = \Int \cl{U}$. The collection $\RO(X)$ of regular open subsets of $X$ forms a complete Boolean algebra with the meet operation $\wedge$ given by the intersection, and complement given by $\neg U = \Int(X \sminus U)$. If $X$ is Baire, $\RO(X)$ can also be viewed as the quotient of the Boolean algebra of Baire measurable subsets of $X$ by the ideal of meager sets. See \cite{Kechris1995}*{Section~8}. We denote by $\hat X$ the Stone space of the algebra $\RO(X)$. If $X$ is compact, there is a natural surjective, continuous map $\ell_X \colon \hat X \to X$ given by
\begin{equation*}
  \set{\ell_X(p)} = \bigcap_{U \in p} \cl{U},
\end{equation*}
where $p$ is viewed as an ultrafilter on $\RO(X)$.

The construction $X \mapsto \hat X$ only depends on the topology of $X$, so if $G$ is a group acting on $X$ by homeomorphisms, it also acts on $\hat X$. If $G$ is a discrete group and $G \actson X$ is a $G$-flow, then $G \actson \hat X$ is also a $G$-flow and it is the universal irreducible extension of $G \actson X$. In particular, if $G$ is discrete, a $G$-flow $X$ is Gleason complete iff $X$ is zero-dimensional and the Boolean algebra of clopen subsets of $X$ is complete.
This follows from the results of Gleason~\cite{Gleason1958}. 

The problem when $G$ has non-trivial topology is that the action $G \actson \hat X$ is not necessarily continuous even if the original action of $G$ on $X$ is. In the case where $G$ is non-archimedean, this is easy to fix. Recall that a topological group $G$ is called \df{non-archimedean} if it admits a basis at $1_G$ consisting of open subgroups. For locally compact groups, by a well known theorem of van Dantzig, being non-archimedean is equivalent to being totally disconnected (or \df{tdlc}, for short).

If $\cB$ is a Boolean algebra on which $G$ acts and $V \leq G$, we will denote by $\cB_V$ the subalgebra of $\cB$ of elements fixed by $V$. Note that if $\cB$ is complete, then $\cB_V$ is complete, too.

If $X$ is a $G$-flow, we let
\begin{equation*}
  \RO(G, X) \coloneqq \bigcup \set{\RO_V(X) :  V \text{ open subgroup of } G}
\end{equation*}
and note that, as a direct limit of Boolean algebras, $\RO(G, X)$ is also a Boolean algebra but that it is not necessarily complete. Note also that $\RO(G, X)$ is invariant under the action of $G$ and that the action $G \actson \RO(G, X)$ is continuous (where $\RO(G, X)$ is taken to be discrete).

\begin{lemma} \label{lem-td-ROG-basis}
Let $G$ be a non-archimedean group and let $G \actson X$ be a $G$-flow. Then the elements of $\RO(G, X)$ form a basis for the topology of $X$. 
\end{lemma}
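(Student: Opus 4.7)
The plan is to show that for every open $U \sub X$ and every $x \in U$, there exists some $W \in \RO(G, X)$ with $x \in W \sub U$. I will produce $W$ directly, as the regularization of a set that is invariant under an open subgroup of $G$.

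First, since $X$ is compact Hausdorff, hence regular, I pick an open neighborhood $W_0$ of $x$ with $\cl{W_0} \sub U$. Next, since the action $G \times X \to X$ is jointly continuous at $(1_G, x)$, there exist an open neighborhood $V_0$ of $1_G$ in $G$ and an open neighborhood $U_0 \sub X$ of $x$ such that $V_0 \cdot U_0 \sub W_0$. Using the non-archimedean hypothesis, I refine $V_0$ to an open subgroup $V \leq G$. Then $V \cdot U_0 \sub W_0$, and therefore $\cl{V \cdot U_0} \sub \cl{W_0} \sub U$.

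Now I set $W \coloneqq \Int \cl{V \cdot U_0}$. Then $W$ is regular open by a standard computation ($\Int \cl{\,\cdot\,}$ is always idempotent on arbitrary subsets). Since $V$ is a subgroup, the open set $V \cdot U_0$ is $V$-invariant; because $V$ acts by homeomorphisms on $X$, both closure and interior commute with the action of each $v \in V$, and so $W$ is $V$-invariant as well. Hence $W \in \RO_V(X) \sub \RO(G, X)$. Finally, $x \in V \cdot U_0 \sub W$ (using that $V \cdot U_0$ is open and lies in $\cl{V \cdot U_0}$), while $W \sub \cl{V \cdot U_0} \sub U$, which gives the required basis element.

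There is no serious obstacle here: the argument is just a combination of joint continuity of the action, regularity of the compact Hausdorff space $X$, and the non-archimedean property. The only point requiring a little care is to ensure simultaneously that $W$ stays inside $U$ and is genuinely regular open and $V$-invariant; this is arranged by first shrinking $U_0$ so that $\cl{V \cdot U_0} \sub U$ and then replacing $V \cdot U_0$ by $\Int \cl{V \cdot U_0}$, since the regularization of a $V$-invariant open set (under a group of homeomorphisms) remains $V$-invariant.
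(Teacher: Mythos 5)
Your proof is correct and follows essentially the same route as the paper: shrink $U$ using regularity of $X$, use joint continuity together with the non-archimedean hypothesis to produce an open subgroup $V$ and a $V$-invariant open set inside the shrunken neighborhood, and then regularize. The only cosmetic difference is that the paper first reduces to $U$ regular open and concludes via $\Int \cl{VU_1} \sub \Int \cl{U} = U$, whereas you keep the containment inside $\cl{W_0} \sub U$; both are fine.
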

\begin{proof}
  By regularity of $X$, it suffices to see that for every $x \in U \in \RO(X)$ there exists $U' \in \RO(G, X)$ such that $x \in U' \sub U$. By continuity of the action, there exists an open subgroup $V$ of $G$ and an open subset $U_1 \sub U$ with $x \in U_1$ such that $VU_1 \sub U$. Then $U' = \Int \cl{VU_1}$ works, because $U'$ is $V$-invariant and $U' \sub \Int \cl{U} = U$, since $U$ is regular.
\end{proof}

We denote by $X^\ast_G$ the Stone space of $\RO(G, X)$. The action of $G$ on $X^\ast_G$ is continuous. Note that $X^\ast_G$, being the Stone space of a Boolean algebra, is zero-dimensional. 

\begin{prop}
  \label{p:univ-hp-non-archim}
  Let $G$ be a non-archimedean group and let $G \actson X$ be a $G$-flow. Then  $G \actson X^\ast_G$ is the universal irreducible extension of $X$. 
\end{prop}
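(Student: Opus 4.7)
The plan is to construct the map $\ell_X \colon X^\ast_G \to X$ in the usual Stone-space fashion, verify it is a continuous, $G$-equivariant, irreducible surjection, and then deduce universality from the classical fact that any irreducible surjection of compact Hausdorff spaces induces an isomorphism on the Boolean algebras of regular open sets.

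For $p \in X^\ast_G$, I would set $\ell_X(p)$ to be the unique point of $\bigcap_{U \in p} \cl{U}$. Non-emptiness follows from compactness of $X$ and the finite intersection property. Uniqueness rests on the elementary observation that $\cl{\neg V} \sub X \sminus V$ for every $V \in \RO(G, X)$ (because $\neg V$ is an open set disjoint from the open set $V$), together with the separating property of $\RO(G, X)$ from Lemma~\ref{lem-td-ROG-basis}: if $x \neq x'$ both lay in the intersection, one could pick disjoint $U, U' \in \RO(G, X)$ separating them, and one of them would fail to be in $p$, a contradiction. The same observation rephrased---if $\ell_X(p) \in V \in \RO(G, X)$, then $V \in p$---immediately yields surjectivity (the principal filter at any $x$ extends to an ultrafilter mapped to $x$), continuity (combined with regularity of $X$ and Lemma~\ref{lem-td-ROG-basis}, every open neighborhood of $\ell_X(p)$ contains $\cl{V}$ for some $V \in p$), and irreducibility of $\ell_X$: for any nonempty $V \in \RO(G, X)$ and $x \in V$, one has $\ell_X^{-1}(\set{x}) \sub \hat V$. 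To verify that $G \actson X^\ast_G$ is continuous, note that every basic clopen $\hat V$ comes from a $V \in \RO_W(X)$ for some open subgroup $W \leq G$; if $g_0 \cdot p_0 \in \hat V$, then $W g_0 \times \widehat{g_0^{-1} V}$ is a neighborhood of $(g_0, p_0)$ sent into $\hat V$, using that $V$ is $W$-invariant.

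For universality, given an irreducible $G$-extension $\pi \colon Y \to X$, I would invoke the Gleason-theoretic fact that $\pi^\ast \colon \RO(X) \to \RO(Y)$, $U \mapsto \Int \cl{\pi^{-1}(U)}$, is an isomorphism of Boolean algebras. Since $\pi$ is $G$-equivariant, $\pi^\ast$ preserves $W$-invariance for every open subgroup $W \leq G$, so it restricts to a $G$-equivariant isomorphism $\RO(G, X) \to \RO(G, Y)$. Taking Stone duals produces a $G$-equivariant homeomorphism $\widetilde \pi \colon Y^\ast_G \to X^\ast_G$, and the composition $\phi \coloneqq \ell_Y \circ \widetilde \pi^{-1} \colon X^\ast_G \to Y$ is a continuous $G$-map. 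Unwinding definitions, $\phi(p)$ is the unique point of $\bigcap_{U \in p} \cl{\pi^{-1}(U)}$; applying the continuous map $\pi$ sends this point into $\bigcap_{U \in p} \cl{U} = \set{\ell_X(p)}$, so $\pi \circ \phi = \ell_X$, as required by the universal property.

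The main obstacle is establishing the Boolean algebra isomorphism $\pi^\ast$ from irreducibility of $\pi$: this is a classical consequence of $\pi$ being a closed surjection together with irreducibility (the nontrivial direction is surjectivity of $\pi^\ast$, which uses that $\pi$ sends proper closed sets to proper closed sets), and deserves careful setup. Everything else---continuity of $G \actson X^\ast_G$, irreducibility of $\ell_X$, and the commutativity $\pi \circ \phi = \ell_X$---reduces to routine manipulations with regular open sets, ultrafilters, and open subgroups.
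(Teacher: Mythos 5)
Your proof is correct and follows essentially the same route as the paper's: both rely on Lemma~\ref{lem-td-ROG-basis} to produce the irreducible $G$-map $X^\ast_G \to X$ (the paper factors the Gleason map $\ell \colon \hat X \to X$ through the quotient $\hat X \to X^\ast_G$, while you define it directly by the intersection-of-closures formula), and both derive universality from the fact that an irreducible extension $Y \to X$ induces an isomorphism $\RO(X) \cong \RO(Y)$ which restricts to $\RO(G,X) \cong \RO(G,Y)$, so that $Y^\ast_G \cong X^\ast_G$ and $Y$ is a factor of $X^\ast_G$. No gaps; the added detail on continuity of the action and on $\pi \circ \phi = \ell_X$ is accurate.
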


\begin{proof}
  We denote by $\pi \colon \hat X \to X^\ast_G$ the dual map of the inclusion $\RO(G, X) \sub \RO(X)$ and note that $\pi$ is continuous and $G$-equivariant. By Lemma \ref{lem-td-ROG-basis}, if two elements of $\hat X$ have the same image by $\pi$, then they have the same image under the map $\ell \colon \hat X \to X$. Hence there is a continuous $G$-equivariant map $\ell_{G} \colon X^\ast_G \to X$ such that $\ell_{G} \circ \pi = \ell$.  The map $\ell_{G} \colon X^\ast_G \to X$ is irreducible because $\ell$ is. If $Y \to X$ is an irreducible extension of $X$, then $\hat Y = \hat X$. Thus $\RO(X) = \RO(Y)$ and $\RO(G, X) = \RO(G, Y)$. In particular, $Y$ is a factor of $X^\ast_G = Y^\ast_G$.
\end{proof}

By continuity of the $G$-action on $X$, we have $\Clopen(X) \subseteq \RO(G, X)$, where $\Clopen(X)$ is the subalgebra of $\RO(X)$ consisting of clopen subsets of $X$. That this inclusion is an equality actually characterizes Gleason complete flows for non-archimedean groups.

\begin{cor}
  \label{c:char-mhp}
  Let $G$ be a non-archimedean group and let $G \actson X$ be a $G$-flow. Then the following are equivalent:
  \begin{enumerate}
  \item \label{item-mhp-td} $X$ is Gleason complete;
  \item \label{item-RO-td} $\RO(G, X) = \Clopen(X)$.
  \end{enumerate}
\end{cor}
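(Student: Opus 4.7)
The plan is to exploit Proposition~\ref{p:univ-hp-non-archim}: since the universal irreducible extension of $X$ is $X^\ast_G = \tS(\RO(G, X))$, with irreducible extension map $\ell_G$, the flow $X$ is Gleason complete if and only if $\ell_G \colon X^\ast_G \to X$ is a homeomorphism. The task is therefore to show that the equality $\RO(G, X) = \Clopen(X)$ is precisely what makes this happen.

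For \ref{item-RO-td}~$\Rightarrow$~\ref{item-mhp-td}, Lemma~\ref{lem-td-ROG-basis} ensures that $\RO(G, X) = \Clopen(X)$ is a basis for $X$, so $X$ is zero-dimensional. I would then define $\phi \colon X \to X^\ast_G$ by $\phi(x) = \set{U \in \RO(G, X) : x \in U}$, which is a $G$-equivariant continuous map into $X^\ast_G$ (the sets $\phi(x)$ are ultrafilters since $X$ is compact and zero-dimensional). Because $\cl U = U$ for every $U \in \RO(G, X)$, Hausdorffness yields $\bigcap_{U \in \phi(x)} \cl U = \set{x}$, whence $\ell_G \circ \phi = \mathrm{id}_X$. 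Combined with the irreducibility of $\ell_G$ (a continuous section of an irreducible surjection is automatically surjective), this forces $\ell_G$ to be a homeomorphism.

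For \ref{item-mhp-td}~$\Rightarrow$~\ref{item-RO-td}, I assume $\ell_G$ is a homeomorphism, so that $X$ is zero-dimensional. The inclusion $\Clopen(X) \sub \RO(G, X)$ is the one noted just before the corollary. For the reverse inclusion, take $U \in \RO(G, X)$ and consider the clopen set $\widehat U = \set{p \in X^\ast_G : U \in p}$ in $X^\ast_G$; then $C_U = \ell_G(\widehat U)$ is clopen in $X$. Since $\ell_G(p) \in \bigcap_{V \in p} \cl V \sub \cl U$ for every $p \in \widehat U$, one gets $C_U \sub \cl U$; as $C_U$ is open and $U = \Int \cl U$, this forces $C_U \sub U$. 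Running the same argument on $\neg U \in \RO(G, X)$ and using that $\widehat U$ and $\widehat{\neg U}$ partition $X^\ast_G$ yields the complementary inclusion $C_U \supseteq \cl U \supseteq U$. Hence $C_U = U$, so $U$ is clopen.

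The main technical subtlety is the last computation: verifying that the Stone duality bijection $\RO(G, X) \cong \Clopen(X^\ast_G)$ is transported by the homeomorphism $\ell_G$ to the naive set-theoretic inclusion $U \mapsto U$ into $\Clopen(X)$. Once this identification is in place, both directions of the equivalence follow from a routine assembly of Lemma~\ref{lem-td-ROG-basis}, Proposition~\ref{p:univ-hp-non-archim}, and standard Stone duality.
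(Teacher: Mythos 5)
Your proof is correct and follows essentially the same route as the paper: both directions are deduced from Proposition~\ref{p:univ-hp-non-archim} together with Lemma~\ref{lem-td-ROG-basis}, the only difference being that you spell out the Stone-duality bookkeeping (that the abstract isomorphism $\RO(G,X)\cong\Clopen(X^\ast_G)$ is transported by $\ell_G$ to the set-theoretic identity) which the paper's two-line proof leaves implicit.
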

\begin{proof}
\ref{item-mhp-td} $\Rightarrow$ \ref{item-RO-td} follows from Proposition~\ref{p:univ-hp-non-archim}. Note that \ref{item-RO-td} implies that $X$ is zero-dimensional in view of Lemma \ref{lem-td-ROG-basis}, so the implication \ref{item-RO-td} $\Rightarrow$ \ref{item-mhp-td} also follows from Proposition~\ref{p:univ-hp-non-archim}. 
\end{proof}


\subsection{The general case}
\label{sec:general-case}

When $G$ is a general topological group, one cannot hope to construct the universal irreducible extension as the Stone space of a Boolean algebra: for example, if $G$ is connected, then all of its minimal flows are connected and have no non-trivial clopen sets. So for the general case, we employ Riesz spaces instead of Boolean algebras. 

Recall that a \df{Riesz space} is an ordered real vector space, which is a \df{lattice} for the ordering, i.e., all pairs of elements $a, b$ have a least upper bound $a \vee b$ and a greatest lower bound $a \wedge b$. A Riesz space $\cL$ is called \df{archimedean} if there exists a \df{unit} $\bOne \in \cL$ such that for every $a \in \cL$, there exists $n \in \N$ with $a \leq n  \bOne$. A unit also naturally defines the \df{uniform norm}:
\begin{equation*}
  \nm{a} \coloneqq \inf \set{r \in \R : |a| \leq r \bOne},
\end{equation*}
where, as usual, $|a| = a \vee (-a)$.

A natural example of an archimedean Riesz space is the collection of real-valued continuous functions $C(X)$ on a compact space $X$ with the usual lattice operations and unit the constant function $\bOne$. Then the uniform norm coincides with the $\sup$ norm. The Yosida representation theorem, which we recall below, states that in fact every archimedean Riesz space complete in the uniform norm is of this form.

For every archimedean Riesz space $\cL$ with a unit $\bOne$ (and equipped with the uniform norm), we can consider its \df{spectrum}:
\begin{equation*}
  \tS(\cL) = \set{x \in \cL^* : x(a \vee b) = x(a) \vee x(b) \text{ for all }a, b \in \cL \And x(\bOne) = 1}.
\end{equation*}
$\tS(\cL)$ is a compact space if equipped with the weak$^*$ topology and we have a map $\Gamma \colon \cL \to C(\tS(\cL))$ defined by
\begin{equation*}
  \Gamma(a)(x) = x(a).
\end{equation*}
$\Gamma$ is clearly a contractive homomorphism and, in fact, it is an isometric isomorphism (see \cite{Jonge1977}*{Section~13}). 

Let $G$ be a topological group, let $E$ be a Banach space and let $G \actson E$ be an action by isometric isomorphisms. We will say that an element $\phi \in E$ is \df{$G$-continuous} if the map $G \to E, g \mapsto g \cdot \phi$ is norm-continuous.

\begin{lemma}
  \label{l:continuous-action}
  Let $G$ be a topological group, let $X$ be a compact space and let $G \actson X$ be an action by homeomorphisms. Then the following are equivalent:
  \begin{enumerate}
  \item $G \actson X$ is a $G$-flow (that is, the action is jointly continuous);
  \item Every function $\phi \in C(X)$ is $G$-continuous for the induced action $G \actson C(X)$.
  \end{enumerate}
\end{lemma}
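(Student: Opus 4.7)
The plan is to establish the two implications separately, leaning on the compactness of $X$ in both directions.

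For $(i) \Rightarrow (ii)$, I would first note that $G$ acts by linear isometries on $(C(X), \nm{\cdot})$, so that $G$-continuity of $g \mapsto g \cdot \phi$ reduces to norm-continuity at $1_G$. Fix $\phi \in C(X)$ and $\epsilon > 0$. Joint continuity of the action, together with continuity of $\phi$, yields for each $x \in X$ an open neighborhood $U_x$ of $1_G$ and an open neighborhood $V_x$ of $x$ such that $|\phi(g \cdot y) - \phi(x)| < \epsilon/2$ whenever $g \in U_x$ and $y \in V_x$. Compactness of $X$ gives a finite subcover $V_{x_1}, \ldots, V_{x_n}$; setting $U \coloneqq \bigcap_{i=1}^n U_{x_i}$ and applying the triangle inequality yields $|\phi(g \cdot y) - \phi(y)| < \epsilon$ for all $g \in U$ and $y \in X$. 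Modulo the inversion built into the definition of $G \actson C(X)$, this translates to $\nm{g \cdot \phi - \phi} < \epsilon$ for $g$ in an appropriate open neighborhood of $1_G$. This step is the usual ``a continuous function on a compact $G$-flow is uniformly $G$-continuous'' argument.

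For $(ii) \Rightarrow (i)$, I would argue via nets. Suppose $g_i \to g$ in $G$ and $x_i \to x$ in $X$; the goal is to show $g_i \cdot x_i \to g \cdot x$ in $X$. Since $X$ is compact Hausdorff and $C(X)$ separates points, a standard subnet argument combined with Urysohn's lemma reduces this to showing that $\phi(g_i \cdot x_i) \to \phi(g \cdot x)$ for every $\phi \in C(X)$. Rewriting $\phi(g_i \cdot x_i) = (g_i^{-1} \cdot \phi)(x_i)$ and inserting the intermediate term $(g^{-1} \cdot \phi)(x_i)$, the triangle inequality gives
\begin{equation*}
  |\phi(g_i \cdot x_i) - \phi(g \cdot x)| \leq \nm[\big]{g_i^{-1} \cdot \phi - g^{-1} \cdot \phi} + |(g^{-1} \cdot \phi)(x_i) - (g^{-1} \cdot \phi)(x)|.
\end{equation*}
The first summand tends to $0$ by hypothesis $(ii)$ applied to $\phi$, combined with continuity of inversion in $G$ to ensure $g_i^{-1} \to g^{-1}$; the second tends to $0$ because $g^{-1} \cdot \phi$ is a fixed continuous function on $X$ and $x_i \to x$.

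The main obstacle is genuinely minor: it amounts to bookkeeping with the inversion in the formula $(g \cdot \phi)(x) = \phi(g^{-1} \cdot x)$ when passing between $X$ and $C(X)$, plus the routine verification that weak-pointwise convergence against $C(X)$ implies convergence in a compact Hausdorff space. Neither direction needs anything beyond the finite-subcover argument in the first step and the triangle inequality in the second.
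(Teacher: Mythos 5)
Your proof is correct, and both directions are sound; the interesting difference from the paper is in the implication (ii) $\Rightarrow$ (i). The paper dispatches (i) $\Rightarrow$ (ii) as obvious (your finite-subcover argument is exactly the standard justification), and for the converse it argues directly with neighborhoods: given $x_0 \in U$ open, it picks $W \ni x_0$ with $\cl{W} \sub U$, takes a Urysohn function $\phi$ equal to $1$ on $\cl{W}$ and $0$ off $U$, and uses $G$-continuity of this single $\phi$ to get $V \ni 1_G$ with $\nm{v^{-1}\cdot\phi - \phi} < 1/2$, whence $V \cdot W \sub U$. You instead run a net argument: you use compactness plus the fact that $C(X)$ separates points to reduce convergence in $X$ to weak convergence against all of $C(X)$, and then split $|\phi(g_i \cdot x_i) - \phi(g\cdot x)|$ by the triangle inequality into a norm term controlled by hypothesis (ii) and a term controlled by continuity of the fixed function $g^{-1}\cdot\phi$. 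Both are legitimate; the paper's version has the small advantage of producing the neighborhoods $V, W$ explicitly (continuity at $(1_G, x_0)$, from which joint continuity everywhere follows since each $g_0$ acts by a homeomorphism), and of needing only one test function per open set, while yours establishes joint continuity at an arbitrary point $(g, x)$ in one stroke at the cost of the subnet/separation-of-points detour. The inversion bookkeeping you flag is handled correctly: $\phi(g_i \cdot x_i) = (g_i^{-1}\cdot\phi)(x_i)$ under the convention $(g\cdot\phi)(x) = \phi(g^{-1}\cdot x)$, and $g_i^{-1} \to g^{-1}$ by continuity of inversion, so the first summand indeed goes to $0$ by (ii).
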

\begin{proof}
  \begin{cycprf}
  \item [\impnext] This is obvious.
  \item [\impfirst] Let $U \sub X$ be open and let $x_0 \in U$. Our goal is to find an open $V \ni 1_G$ and an open $W \ni x_0$ such that $V \cdot W \sub U$. Let $W \ni x_0$ be open such that $\cl{W} \sub U$. By Urysohn's lemma, there exists $\phi \in C(X)$ with $\phi|_{\cl{W}} = 1$ and $\phi|_{X \sminus U} = 0$. As $\phi$ is $G$-continuous, there exists $V \ni 1_G$ such that for every $v \in V$, $\nm{v^{-1} \cdot \phi - \phi} < 1/2$. This implies that $V \cdot W \sub U$.
  \end{cycprf}
\end{proof}

Next we will describe the universal irreducible extension of a $G$-flow $G \actson X$. Let $\cB(X)$ denote the Riesz space of bounded Borel functions on $X$ with unit the constant function $\bOne$ and let $\cM$ be the ideal given by:
\begin{equation*}
  \cM = \set[\big]{\phi \in \cB(X) : \set{x \in X : \phi(x) \neq 0} \text{ is meager}}.
\end{equation*}
The ideal $\cM$ allows us to define the essential supremum seminorm on $\cB(X)$ by
\begin{equation*}
  \nm{\phi}_\cM = \inf \set[\big]{r \in \R : \set{x \in X : |\phi(x)| > r } \text{ is meager} }.
\end{equation*}
Denote $B(X) \coloneqq \cB(X)/\cM$ and note that $\nm{\cdot}_\cM$ descends to a norm on $B(X)$. Let $\hat X$ be the spectrum of $B(X)$. It can naturally be identified with the Stone space of the Boolean algebra $\RO(X)$ (see \cite{Jonge1977}*{Section~14}). The space $B(X)$ has also been considered before in a dynamical context by Keynes and Robertson in \cite{Keynes1968}. 

We let $B_G(X)$ denote the set of $G$-continuous elements of $B(X)$. We note that $B_G(X)$ is a closed subspace of $B(X)$ which is also closed under the lattice operations, so we can define $\hat X_G \coloneqq \tS(B_G(X))$. Because $C(\hat X_G) \cong B_G(X)$, it follows from Lemma~\ref{l:continuous-action} that $G \actson \hat X_G$ is a $G$-flow. There is a natural injective map $C(X) \to B(X)$, whose image is, by virtue of Lemma~\ref{l:continuous-action}, contained in $B_G(X)$. Slightly abusing notation, we will identify $C(X)$ with its image in $B_G(X)$. The inclusions $C(X) \sub B_G(X) \sub B(X)$ translate to factor maps $\hat{X} \to \hat X_G \to X$. We have the following.
 
\begin{prop}
  \label{p:mhp-extension-general}
  Let $G$ be a topological group and let $G \actson X$ be a $G$-flow. Then the flow $G \actson \hat X_G$ is the universal irreducible extension of $G \actson X$. In particular, $X$ is Gleason complete if and only if the natural injection $C(X) \to B_G(X)$ is a bijection.
\end{prop}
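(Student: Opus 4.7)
I would proceed in four steps: (1) show that $\hat X_G$ is a $G$-flow with a natural factor map to $X$; (2) establish that this factor map is irreducible; (3) verify the universal property with respect to irreducible extensions; (4) deduce the ``in particular'' clause from Yosida duality.

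For (1) and (2): By Lemma~\ref{l:continuous-action}, the action $G \actson \hat X_G = \tS(B_G(X))$ is jointly continuous, since every element of $C(\hat X_G) \cong B_G(X)$ is by construction $G$-continuous. The chain of inclusions $C(X) \hookrightarrow B_G(X) \hookrightarrow B(X)$ dualizes to $G$-equivariant continuous surjections $\hat X \to \hat X_G \to X$ whose composition is the classical Gleason map $\ell_X$. Irreducibility of $\hat X_G \to X$ follows from irreducibility of $\ell_X$ by a short chase: given nonempty open $U \sub \hat X_G$, its preimage in $\hat X$ is nonempty open, so by irreducibility of $\ell_X$ it contains a fiber $\ell_X^{-1}(\set{x})$ for some $x \in X$; surjectivity of $\hat X \to \hat X_G$ then forces the fiber of $\hat X_G \to X$ over $x$ to lie in $U$.

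For (3), the key input is the classical Gleason fact that an irreducible surjection $\pi \colon Y \to X$ induces a Boolean algebra isomorphism $\RO(X) \cong \RO(Y)$ via $U \mapsto \pi^{-1}(U)$, which is $G$-equivariant when $\pi$ is a $G$-map. Dualizing, one obtains a $G$-equivariant, isometric Riesz isomorphism $B(Y) = C(\hat Y) \cong C(\hat X) = B(X)$. Composing with the $G$-equivariant inclusion $C(Y) \hookrightarrow B(Y)$ (which is isometric because $Y$ is a Baire space, so nonempty open sets are non-meager) yields a $G$-equivariant embedding $C(Y) \hookrightarrow B(X)$ extending the canonical inclusion $C(X) \hookrightarrow B(X)$. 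Applying Lemma~\ref{l:continuous-action} now to $G \actson Y$, every element of $C(Y)$ is $G$-continuous in $C(Y)$, hence in $B(X)$ via the isometric transport; thus the image of $C(Y)$ lies in $B_G(X)$. Dualizing delivers the required $G$-map $p \colon \hat X_G \to Y$, and the relation $\pi \circ p = \pi_X$ follows from the fact that both sides correspond under Yosida duality to the canonical inclusion $C(X) \hookrightarrow B_G(X)$.

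Step (4) is then immediate: $X$ is Gleason complete iff the factor map $\hat X_G \to X$ is an isomorphism, iff the injection $C(X) \to C(\hat X_G) = B_G(X)$ is surjective. The main obstacle in the plan is invoking the Gleason correspondence between irreducible surjections and isomorphisms of regular-open algebras; this is not proved in the excerpt but is a standard ingredient of the theory underlying the definition of $\hat X$. All remaining work is formal manipulation with Yosida duality, isometric norm comparisons on Baire spaces, and routine $G$-equivariance checks.
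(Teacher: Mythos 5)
Your proposal is correct and follows essentially the same route as the paper: irreducibility of $\hat X_G \to X$ is inherited from the Gleason cover $\hat X \to X$, and the universal property is obtained by embedding $C(Y)$ into $B(X)$ via the classical identification $\hat Y \cong \hat X$ for an irreducible extension $Y \to X$, then observing via Lemma~\ref{l:continuous-action} that the image lands in $B_G(X)$. The only cosmetic imprecision is that the Gleason isomorphism $\RO(X) \cong \RO(Y)$ is given by $U \mapsto \Int\cl{\pi^{-1}(U)}$ rather than $U \mapsto \pi^{-1}(U)$; this does not affect the argument.
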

\begin{proof}
First, as $\hat X_G$ is a factor of $\hat X$, it is clear that the extension $\hat X_G  \to X$ is irreducible. If $G \actson Y$ is an irreducible extension of $G \actson X$, then by the universal property of $\hat X$, there exists an embedding $\iota \colon C(Y) \to C(\hat X) = B(X)$ (see above). It follows from Lemma~\ref{l:continuous-action} that every $\phi \in C(Y)$ is $G$-continuous, so $\iota(\phi)$ is also $G$-continuous. Therefore $\iota(C(Y)) \sub B_G(X)$ and this gives a factor map $\hat X_G \to Y$.
\end{proof}

\begin{cor} \label{cor-Gleason complete-localcondition}
Let $G$ be a topological group and let $L$ be an open subgroup of $G$. If $G \actson X$ is a $G$-flow, then $X$ is Gleason complete as a $G$-flow if and only if it is Gleason complete as an $L$-flow.
\end{cor}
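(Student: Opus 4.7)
The plan is to invoke Proposition~\ref{p:mhp-extension-general} for both groups and show that, as subspaces of $B(X)$, the Riesz spaces $B_G(X)$ and $B_L(X)$ coincide whenever $L$ is an open subgroup of $G$. Granting this, the natural injection $C(X) \to B_G(X)$ is the same as the natural injection $C(X) \to B_L(X)$, so one is a bijection if and only if the other is, which by Proposition~\ref{p:mhp-extension-general} is exactly the claimed equivalence.

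Next I would reduce $G$-continuity of a fixed $\phi \in B(X)$ to continuity of the orbit map $g \mapsto g \cdot \phi$ at the identity $1_G$. Since $G$ acts on $B(X)$ by isometries, the identity
\begin{equation*}
  \nm{g \cdot \phi - g_0 \cdot \phi}_\cM = \nm{g_0^{-1}g \cdot \phi - \phi}_\cM
\end{equation*}
shows that continuity at $1_G$ propagates to continuity at every $g_0 \in G$, and the analogous statement holds for the $L$-action.

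The inclusion $B_G(X) \subseteq B_L(X)$ is immediate, because restriction to the subgroup $L$ of a continuous map $G \to B(X)$ is continuous. For the reverse inclusion, let $\phi \in B_L(X)$ and fix $\varepsilon > 0$. By $L$-continuity at $1_L$ there exists an open neighborhood $U$ of $1_L$ in $L$ with $\nm{l \cdot \phi - \phi}_\cM < \varepsilon$ for every $l \in U$. Now the key point is that, as $L$ is open in $G$, its subspace topology coincides with its topology as a topological subgroup, so $U$ is also an open neighborhood of $1_G$ in $G$. Hence $\nm{g \cdot \phi - \phi}_\cM < \varepsilon$ for every $g \in U$, which establishes continuity of $g \mapsto g \cdot \phi$ at $1_G$ and therefore $G$-continuity of $\phi$.

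There is no real obstacle here; the only point to keep clean is the observation that openness of $L$ in $G$ turns neighborhoods of $1_L$ in $L$ into neighborhoods of $1_G$ in $G$, which is precisely what allows one to promote $L$-continuity of the orbit map to $G$-continuity.
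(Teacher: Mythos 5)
Your proposal is correct and follows exactly the paper's route: the paper's proof is the one-line observation that $B_G(X) = B_L(X)$ since $L$ is open, combined with Proposition~\ref{p:mhp-extension-general}. You have merely spelled out the (correct) details behind that equality, namely reducing norm-continuity of the orbit map to continuity at the identity via the isometric action and using that neighborhoods of $1_L$ in $L$ are neighborhoods of $1_G$ in $G$.
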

\begin{proof}
We have $B_G(X) = B_L(X)$ since $L$ is open, so the statement follows from Proposition   \ref{p:mhp-extension-general}. 
\end{proof}

It is proved in \cite{Auslander1977} that for minimal flows $X$, the correspondence $X \mapsto \hat X_G$ is functorial. Our description of $\hat X_G$ suggests the correct formulation of this result for general flows. Recall that a continuous map $\phi \colon X \to Y$ is called \df{category-preserving} if $\phi^{-1}(A)$ is nowhere dense for any nowhere dense $A \sub Y$. Every homomorphism between minimal flows is category-preserving. Indeed, if $\phi \colon X \to Y$ is a factor map between minimal flows and $U \sub X$ is open, non-empty, then finitely many translates of $\phi(U)$ cover $Y$, so $\phi(U)$ must be somewhere dense. Also, every irreducible map between compact spaces is category-preserving.

\begin{prop}
  \label{p:functor}
  The correspondence $X \mapsto \hat X_G$ is a functor from the category of $G$-flows with morphisms category-preserving $G$-maps to the category of Gleason complete flows.
\end{prop}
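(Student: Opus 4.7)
The plan is to dualize: since $\hat X_G = \tS(B_G(X))$, producing a continuous $G$-equivariant map $\hat \phi \colon \hat X_G \to \hat Y_G$ amounts to producing a unital, $G$-equivariant Riesz homomorphism $\phi^* \colon B_G(Y) \to B_G(X)$. The obvious candidate is the pullback $\phi^*(\psi) = \psi \circ \phi$, and I would argue that the category-preserving hypothesis is exactly what is needed for this candidate to be well defined. The remaining points (equivariance, functoriality, landing in Gleason complete flows) would then be formal.

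The pullback $\psi \mapsto \psi \circ \phi$ is a unital Riesz homomorphism $\cB(Y) \to \cB(X)$ because $\phi$ is continuous and Borel. The heart of the argument is to show that it descends to a norm-contractive unital Riesz homomorphism $B(Y) \to B(X)$ on the quotients by the meager ideals. This reduces to the following observation: if $A \sub Y$ is meager, write $A = \bigcup_n A_n$ with each $A_n$ nowhere dense; by the category-preserving hypothesis each $\phi^{-1}(A_n)$ is nowhere dense, so $\phi^{-1}(A) = \bigcup_n \phi^{-1}(A_n)$ is meager. Applied with $A = \set{y \in Y : \psi(y) \neq 0}$ this gives $\phi^*(\psi) \in \cM$ whenever $\psi \in \cM$; applied with $A = \set{y \in Y : |\psi(y)| > r}$ it gives $\nm{\phi^*(\psi)}_\cM \leq \nm{\psi}_\cM$.

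When $\phi$ is $G$-equivariant, a direct computation using $(g \cdot \psi)(y) = \psi(g^{-1} y)$ and equivariance of $\phi$ yields $\phi^*(g \cdot \psi) = g \cdot \phi^*(\psi)$, so $\phi^*$ intertwines the $G$-actions. Combined with the norm-contractivity established above, this implies that $\phi^*$ sends $G$-continuous elements of $B(Y)$ to $G$-continuous elements of $B(X)$, and so restricts to a unital Riesz homomorphism $\phi^* \colon B_G(Y) \to B_G(X)$. Taking spectra produces a continuous $G$-equivariant map $\hat \phi \colon \hat X_G \to \hat Y_G$. Functoriality is then immediate from contravariance at the Riesz-space level: $(\psi \circ \phi)^* = \phi^* \circ \psi^*$ and $\id^* = \id$, so $\widehat{\psi \circ \phi} = \hat \psi \circ \hat \phi$ and $\widehat{\id_X} = \id_{\hat X_G}$. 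The image lies in the subcategory of Gleason complete flows by Proposition~\ref{p:mhp-extension-general}, which implies that $X \mapsto \hat X_G$ is idempotent.

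The main obstacle is the descent in the second paragraph: recognizing the category-preserving hypothesis as exactly the right condition to make pullback modulo meager well defined. Once the descent is in place, everything else is a formal consequence of the Yosida-style duality between compact spaces and archimedean Riesz spaces that underlies the construction of $\hat X_G$.
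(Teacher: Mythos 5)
Your proposal is correct and follows essentially the same route as the paper: the key point in both is that the category-preserving hypothesis makes preimages of meager sets meager, so the pullback descends to a unital Riesz homomorphism $B(Y) \to B(X)$ carrying $B_G(Y)$ into $B_G(X)$, and duality yields the map $\hat X_G \to \hat Y_G$. The additional verifications you spell out (contractivity, equivariance, the functor identities) are exactly the routine details the paper leaves implicit.
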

\begin{proof}
  Let $\phi \colon X \to Y$ be a category-preserving homomorphism of $G$-flows. Then $\phi^{-1}(A)$ is meager for every meager set $A \sub Y$, so we obtain a dual homomorphism of Riesz spaces $\phi^* \colon B(Y) \to B(X)$ given by $\phi^*([f]) = [f \circ \phi]$, where $f \in \cB(Y)$ and $[f]$ denotes its equivalence class in $B(Y)$. The image of $B_G(Y)$ is contained in $B_G(X)$, so by the duality theorem, this gives us a map $\hat X_G \to \hat Y_G$.
\end{proof}


\section{Characterizations of Gleason complete flows} \label{sec-carac-mhp}

Starting from this section, $G$ will denote a locally compact group. A \df{pseudo-norm} on $G$ is a continuous function $\nm{\cdot} \colon G \to \R_+$ satisfying:
\begin{itemize}
\item $\nm{1_G} = 0$;
\item $\nm{g^{-1}} = \nm{g}$ for all $g \in G$;
\item $\nm{gh} \leq \nm{g} + \nm{h}$ for all $g, h \in G$.
\end{itemize}

We denote by $B_r$ the set of elements $g \in G$ such that $\nm{g} < r$, and we also let $\bar{B}_{r}$ be the set of elements $g \in G$ such that $\nm{g} \leq r$. We say that $\nm{\cdot}$ is \df{proper} if $\bar{B}_{r}$ is compact for all $r$. We say that a pseudo-norm $\nm{\cdot}$ is a \df{norm} if it satisfies that the only element $g$ with $\nm{g} = 0$ is $1_G$. A norm is called \df{compatible} if it induces the topology of $G$. Note that any group $G$ that admits a proper pseudo-norm must be $\sigma$-compact (because $G = \bigcup_{n \in \N} \bar{B}_n$).

Every pseudo-norm induces a right-invariant pseudo-metric $d_r$ on $G$ defined by
\begin{equation}
  \label{eq:right-inv-metric}
  d_r(g, h) = \nm{gh^{-1}}.
\end{equation}

We will say that a pseudo-norm $\nm{\cdot}$ is \df{normal} if for every $g \in G$, the conjugation by $g$ is a uniformly continuous map of the pseudo-metric space $(G,d_r)$. In particular, the kernel $\set{g \in G : \nm{g} = 0}$ of a normal pseudo-norm is a normal subgroup of $G$.

\begin{prop}
	\label{prop-exist-contin-pseudonorm}
	Let $G$ be a $\sigma$-compact locally compact group, and let $V$ be an open neighborhood of $1_G$. Then there exists a  proper, normal pseudo-norm on $G$ and $r > 0$ such that $B_r \subseteq V$.
\end{prop}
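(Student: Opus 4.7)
The plan is to produce the pseudo-norm via the Birkhoff--Kakutani metrization construction, applied to a \emph{bi-infinite} sequence of symmetric neighborhoods of $1_G$. The idea is that on the positive indices the sequence behaves like a basis of neighborhoods of $1_G$ (yielding compatibility, and in turn the small-ball property and normality), while on the negative indices it exhausts $G$ (yielding properness). The hypothesis of $\sigma$-compactness is used only on the negative half of the sequence.

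Using $\sigma$-compactness I first fix an increasing exhaustion $G = \bigcup_{n \geq 1} C_n$ by compact sets. I then inductively construct symmetric precompact open neighborhoods $(V_n)_{n \in \Z}$ of $1_G$ satisfying $V_1 \sub V$; $V_{n+1}^3 \sub V_n$ for every $n \in \Z$; $\bigcap_{n \geq 1} V_n = \set{1_G}$; and $C_n \sub V_{-n}$ for $n \geq 1$. For positive indices this is standard from continuity of multiplication; for $n \leq 0$ I proceed inductively, using local compactness to choose $V_{n-1}$ to be a symmetric precompact open neighborhood of $1_G$ containing $\cl{V_n^3} \cup C_{|n|+1} \cup C_{|n|+1}^{-1}$. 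The classical Birkhoff--Kakutani procedure applied to this family---set $f(g) = 2^{-n}$ for $g \in V_n \setminus V_{n+1}$, $f(1_G) = 0$, and $d(g,h) = \inf \sum_i f(g_i^{-1} g_{i+1})$ over finite chains $g = g_0, \ldots, g_m = h$---produces a left-invariant pseudo-metric on $G$ for which the usual proof yields
\begin{equation*}
V_{n+1} \sub \set{h \in G : d(1_G, h) < 2^{-n}} \sub V_{n-1} \quad \text{for every } n \in \Z.
\end{equation*}
I then set $\nm{g} \coloneqq d(1_G, g)$.

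From this two-sided bound the three required properties fall out. The balls $B_{2^{-n}}$ form a neighborhood basis at $1_G$, so $\nm{\cdot}$ is compatible with the topology of $G$ and in particular some $B_r$ is contained in $V$. For properness, given $r > 0$ I choose $n \in \Z$ with $2^{-n} > r$; then $\bar B_r \sub \cl{V_{n-1}}$, which is compact by construction. For normality the key observation is that it is automatic from compatibility: for every $g \in G$ and $\epsilon > 0$, the set $\set{h \in G : \nm{g h g^{-1}} < \epsilon}$ is an open neighborhood of $1_G$ (as the preimage of $[0, \epsilon)$ under a continuous map), so by compatibility it contains some $B_\delta$; combined with the identity $d_r(g h_1 g^{-1}, g h_2 g^{-1}) = \nm{g h_1 h_2^{-1} g^{-1}}$ this gives uniform continuity of conjugation by $g$ on $(G, d_r)$.

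The main technical point is the bi-infinite construction of the $V_n$: the negative-index sets must remain precompact (so large balls are precompact) while their union exhausts $G$ (so $\nm{\cdot}$ is finite on $G$), all while preserving $V_{n+1}^3 \sub V_n$. This is precisely where $\sigma$-compactness is needed---the exhaustion $(C_n)$ is what one feeds into the inductive step to ensure $\bigcup_n V_n = G$. Everything else is a direct adaptation of the classical Birkhoff--Kakutani metrization lemma to a bi-infinite family.
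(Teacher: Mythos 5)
Your construction has a genuine gap at the step where you require $\bigcap_{n \geq 1} V_n = \set{1_G}$. This condition cannot be arranged in a general $\sigma$-compact locally compact group: it would make $\set{1_G}$ a $G_\delta$ set, which for a locally compact group forces metrizability. For instance, $G = (\Z/2\Z)^I$ with $I$ uncountable is compact (hence $\sigma$-compact locally compact), but every countable intersection of neighborhoods of the identity contains a nontrivial subgroup, so no such sequence $(V_n)_{n\geq 1}$ exists. This is not a removable convenience in your argument: if you drop the trivial-intersection requirement, the Birkhoff--Kakutani procedure still yields a proper pseudo-norm with some $B_r \subseteq V$, but its kernel is the closed subgroup $K = \bigcap_{n\geq 1} V_n$, which for an arbitrary choice of the $V_n$ need not be normal in $G$ --- and as noted right after the definition in Section~\ref{sec-carac-mhp}, the kernel of a \emph{normal} pseudo-norm must be a normal subgroup. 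Correspondingly, your normality argument collapses exactly at the phrase ``by compatibility it contains some $B_\delta$'': without trivial intersection the balls $B_\delta$ do not form a neighborhood basis at $1_G$, and the open set $\set{h : \nm{ghg^{-1}} < \eps}$ need not contain any $B_\delta$.

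The missing ingredient is precisely the Kakutani--Kodaira theorem, which is what the paper's proof invokes: one can choose the neighborhoods inside $V$ so that $K = \bigcap_n V_n$ is a \emph{compact normal} subgroup with $G/K$ metrizable (this requires a genuine argument, e.g.\ saturating the $V_n$ under conjugation by an exhausting sequence of compact sets). The paper then applies Struble's theorem to get a compatible proper norm on $G/K$ and pulls it back to a pseudo-norm on $G$, for which your (correct) observation that compatibility implies normality applies on the quotient. Your bi-infinite Birkhoff--Kakutani construction is essentially a proof of the metrizable case, i.e.\ of Struble's theorem itself, and would become a complete proof if you first performed the Kakutani--Kodaira reduction and ran it on $G/K$; as written, it does not cover non-metrizable $G$.
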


\begin{proof}
	Choose and open neighborhood $W$ of $1_G$ such that $W^2 \subseteq V$. Since $G$ is $\sigma$-compact, theorems of Kakutani--Kodaira (\cite[Theorem 8.7]{Hewitt1979}) and Struble~\cite{Struble1974} ensure that there exists a compact normal subgroup $K$ of $G$ with $K \subseteq W$ such that $G/K$ admits a compatible and proper norm $\nm{\cdot}_{G/K}$. If we let $\nm{g} = \nm{gK}_{G/K}$, then $\nm{\cdot}$ is a pseudo-norm on $G$ that is proper. Moreover since the image of $W$ in $G/K$ is an open neighborhood of the identity in $G/K$ and $\nm{\cdot}_{G/K}$ is compatible, there is $r > 0$ such that $\nm{gK}_{G/K} < r$ implies $gK \in WK$. Hence $\nm{g} < r$ implies $g \in V$. Normality is clear since $\nm{\cdot}_{G/K}$ induces the topology on $G/K$. 
\end{proof}

Let $\nm{\cdot}$ be some fixed proper pseudo-norm on $G$. If $G \actson X$ is a $G$-flow, we can define a pseudo-metric $\dtp$ on $X$ by
\begin{equation}
  \label{eq:dfn-dtp}
\dtp(x, y) = \inf \set{\nm{g} : g \in G, g \cdot x = y}.
\end{equation}
If $x$ and $y$ are not in the same orbit, then $\dtp(x, y) = \infty$.

Note that since $\nm{\cdot}$ is proper, $\dtp$ is always lower semi-continuous for the compact topology $\tau$ on $X$. Recall that a real-valued function $f$ is called \df{lower semi-continuous (lsc)} if for every real number $r$ the set $\left\lbrace f > r \right\rbrace$ is open. It is \df{upper semi-continuous (usc)} if $\left\lbrace f < r \right\rbrace$ is open.

When $G$ is metrizable, we can work throughout with a fixed compatible, proper norm on $G$, and then $\dtp$ is a metric on $X$ that refines the topology $\tau$, i.e., $(X, \tau, \dtp)$ is a \df{compact topometric space} in the sense of \cite{BenYaacov2013b}. In general, one can work with a \df{topouniform spaces} as is done in \cite{Basso2021a}, but we will not need this here. In the case where $G$ is Polish, locally compact, the topometric space above is the same as the one considered by Zucker~\cite{Zucker2021}. The metric $\dtp$ also provides a convenient way to express $G$-continuity: a function $f \colon X \to \R$ is $G$-continuous iff it is uniformly continuous as a function on the metric space $(X, \dtp)$.

The following characterization of Gleason complete flows is the main theorem of this section. Note that because every locally compact group admits an open, $\sigma$-compact subgroup (for example, the subgroup generated by any compact neighborhood of the identity), the condition in the theorem is not restrictive. If $G$ is already $\sigma$-compact, one can simply take $L = G$ below.
\begin{theorem}
	\label{th:char-distance}
	Let $G$ be a locally compact group and let $L$ be a $\sigma$-compact, open subgroup of $G$. If $G \actson X$ is a $G$-flow, the following are equivalent:
	\begin{enumerate}
    \item \label{item-defGleason complete} $X$ is Gleason complete;
    \item \label{item-car-loc} $V \overline{U}$ is open for every open neighborhood $V$ of $1_G$ and open subset $U$ of $X$;
    \item \label{item-cont-dist} for every proper pseudo-norm on $L$ (with the associated pseudo-metric $\dtp$ for the action $L \actson X$) and for every open subset $U$ of $X$, the function $X \to \R \cup \left\lbrace \infty \right\rbrace $, $x \mapsto \dtp(x, \cl{U})$, is continuous.
	\end{enumerate}

When $G$ is a tdlc group, these are also equivalent to:
\begin{enumerate}[resume]
	\item \label{item-RO-tdlc} $\RO(G, X) = \Clopen(X)$;
  \item \label{item-complete-tdlc} $X$ is zero-dimensional and for every compact open subgroup $V$ of $G$, the Boolean algebra $\Clopen_V(X)$ is complete.
\end{enumerate}
\end{theorem}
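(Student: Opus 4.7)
I will establish the cycle (i) $\Rightarrow$ (ii) $\Leftrightarrow$ (iii) $\Rightarrow$ (i) in the general case, and add (i) $\Leftrightarrow$ (iv) $\Leftrightarrow$ (v) in the tdlc setting. The central calculation is the identity
\[
\set{x \in X : \dtp(x, \cl U) < r} = B_r \cl U,
\]
together with the lower semi-continuity of $\dtp(\cdot, \cl U)$. Given this, (ii) $\Leftrightarrow$ (iii) is essentially immediate: under (ii) the set $B_r \cl U$ is open, which upgrades the lsc function $\dtp(\cdot, \cl U)$ to a continuous one; for the converse, Proposition~\ref{prop-exist-contin-pseudonorm} produces a proper pseudo-norm on $L$ with $B_r \sub V$, so that $B_r \cl U$ open yields an open neighborhood of $\cl U$ inside $V \cl U$, and a translation argument upgrades this to openness of $V \cl U$.

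For (i) $\Rightarrow$ (ii), fix an open neighborhood $V$ of $1_G$ and an open subset $U$ of $X$. Use Proposition~\ref{prop-exist-contin-pseudonorm} on $L$ to find a proper pseudo-norm and $r \in (0, 1]$ with $B_r \sub V$. The truncated distance $\phi \coloneqq \min\bigl(\dtp(\cdot, \cl U), 1\bigr) \in \cB(X)$ is lsc, and the triangle inequality gives $|\phi(gx) - \phi(x)| \le \nm g$ for all $g \in L$, so $[\phi] \in B_G(X)$. By (i) and Proposition~\ref{p:mhp-extension-general}, $\phi$ admits a continuous representative $\psi \in C(X)$. Since $\phi$ is lsc and agrees with $\psi$ on a comeager set, a standard argument gives $\psi \ge \phi$ pointwise; and since $\phi$ vanishes on $U$ and agrees with $\psi$ on a dense subset of $U$, continuity forces $\psi = 0$ on $\cl U$. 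Then $\set{\psi < r}$ is an open neighborhood of $\cl U$ contained in $\set{\phi < r} = B_r \cl U \sub V \cl U$, so $V \cl U$ contains an open neighborhood of $\cl U$; translating by arbitrary $v_0 \in V$ then shows $V \cl U$ is open at every one of its points.

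The main obstacle is (ii) $\Rightarrow$ (i), which I would prove by contradiction. If $X$ is not Gleason complete, Proposition~\ref{p:mhp-extension-general} supplies $\phi \in B_G(X)$ with no continuous representative; a standard Baire-category comparison of the usc and lsc regularizations of a Borel representative of $\phi$ then produces $x_0 \in X$ and reals $r < s$ such that, for every open neighborhood $W$ of $x_0$, both $\set{\phi < r} \cap W$ and $\set{\phi > s} \cap W$ are non-meager. Let $U_r, U_s \in \RO(X)$ be the regular opens corresponding under $B(X) \cong \RO(X)$ to the classes $[\chi_{\set{\phi<r}}]$ and $[\chi_{\set{\phi>s}}]$; then $x_0 \in \cl{U_r} \cap \cl{U_s}$ while $U_r \cap U_s = \emptyset$. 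By $G$-continuity of $\phi$, for $\epsilon \coloneqq (s-r)/3$ there exists an open neighborhood $V$ of $1_G$ with $\nm{g\phi - \phi}_\cM < \epsilon$ for every $g \in V$; a direct calculation modulo meager gives $g U_r \sub U_{r+\epsilon}$ and $g U_s \sub U_{s-\epsilon}$ in $\RO(X)$, hence $V U_r \sub U_{r+\epsilon}$. Now (ii) says $V \cl{U_r}$ is open, and since $V \cl{U_r} \sub \cl{V U_r} \sub \cl{U_{r+\epsilon}}$, we conclude $V \cl{U_r} \sub \Int \cl{U_{r+\epsilon}} = U_{r+\epsilon}$. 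In particular $\cl{U_r} \sub U_{r+\epsilon}$, and symmetrically $\cl{U_s} \sub U_{s-\epsilon}$. But the Boolean meet of $[\chi_{\set{\phi<r+\epsilon}}]$ and $[\chi_{\set{\phi>s-\epsilon}}]$ in $B(X)$ vanishes, so $U_{r+\epsilon} \cap U_{s-\epsilon} = \emptyset$, contradicting $x_0 \in \cl{U_r} \cap \cl{U_s}$.

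Finally, in the tdlc case, (i) $\Leftrightarrow$ (iv) is Corollary~\ref{c:char-mhp}. For (iv) $\Rightarrow$ (v), Lemma~\ref{lem-td-ROG-basis} forces $X$ to be zero-dimensional, and under (iv) one has $\Clopen_V(X) = \RO_V(X)$ for every open subgroup $V$; the latter is the fixed-point subalgebra of the complete Boolean algebra $\RO(X)$ under $V$, hence complete. For (v) $\Rightarrow$ (iv), given $U \in \RO(G, X)$, pick a compact open subgroup $V$ fixing $U$ and use zero-dimensionality to write $U = \bigcup \set{VC : C \in \Clopen(X),\ C \sub U}$; each $VC$ is clopen since $V$ is compact, $V$-invariant by construction, and the family is directed in $\Clopen_V(X)$. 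By completeness its supremum $\tilde U$ exists and satisfies $\tilde U \supseteq U$ as subsets. If $\tilde U \ne U$, then since $U = \Int \cl U$ one can find a clopen $C' \sub \tilde U \sminus \cl U$; but then $\tilde U \sminus V C'$ is a strictly smaller $V$-invariant clopen still containing $U$, contradicting minimality. Hence $\tilde U = U$, so $U$ is clopen.
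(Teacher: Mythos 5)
Your proposal is correct in substance and covers all five equivalences, but it takes a genuinely different route from the paper in two places. For (ii) $\Rightarrow$ (i), the paper argues directly on an irreducible extension $\pi \colon Y \to X$: it separates two points of a fiber by open sets $O_1, O_2$ with $VO_1 \cap O_2 = \emptyset$, pushes down via the fiber images $\pi_*(O_i)$, and derives a contradiction from Lemma~\ref{lem-open-intersect}. You instead work entirely inside $B(X)$: you take a $G$-continuous class with no continuous representative, extract a point $x_0$ and levels $r<s$ with $x_0 \in \cl{U_r}\cap\cl{U_s}$ for disjoint regular open level sets, and use (ii) plus $G$-continuity to force $\cl{U_r} \sub U_{r+\epsilon}$ and $\cl{U_s} \sub U_{s-\epsilon}$, a contradiction. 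Both are valid; the paper's version is more geometric and self-contained, while yours leans on Proposition~\ref{p:mhp-extension-general} for both directions and makes the parallel with Gleason's original separation argument more visible. For (i) $\Rightarrow$ (ii)/(iii), the paper proves continuity of $\dtp(\cdot,\cl U)$ directly by sandwiching a continuous representative $\theta$ between the lsc function $\dtp(\cdot,\cl U)$ and the usc function $\dtp(\cdot,U)$ (after showing they agree off a meager set) and then using $\dtp$-contractivity of $\theta$ to pin it down exactly; you shortcut this by using only the lsc minorant plus the observation that $\psi$ vanishes on $\cl U$, which suffices for (ii) and then yields (iii) via the identity $\set{x : \dtp(x,\cl U)<r} = B_r\cl U$. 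That is a legitimate simplification. One small imprecision: at the end of your (i) $\Rightarrow$ (ii) you say that having an open set $W$ with $\cl U \sub W \sub V\cl U$, ``translating by arbitrary $v_0 \in V$'' finishes the job — but $v_0 W \sub v_0 V \cl U$ need not lie in $V\cl U$. You must first shrink: for each $v_0 \in V$ choose $V' \ni 1_G$ with $v_0 V' \sub V$, apply your neighborhood statement to $V'$, and translate \emph{that}; this is exactly Lemma~\ref{lem-car-loc-sub} of the paper, and is the same device you correctly invoke in (iii) $\Rightarrow$ (ii), so the fix is immediate. The tdlc part of your argument ((v) $\Rightarrow$ (iv) via the supremum $\tilde U$ of the invariant clopens inside $U$) is a direct rephrasing of the paper's density argument with Lemma~\ref{lem-boolean} and is fine.
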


 Before going further, we make a few comments. First note that it follows in particular that statement \ref{item-cont-dist} holds for some $L$ iff it holds for every $L$. The equivalence between \ref{item-defGleason complete} and  \ref{item-car-loc} is already contained in \cite{Zucker2021} (up to the observation that when $G$ is locally compact, Definition 3.1 from \cite{Zucker2021} can be restated as in \ref{item-car-loc}). Here we provide an alternative proof of that equivalence. The proof of \ref{item-car-loc} $\Rightarrow$ \ref{item-defGleason complete} follows arguments close to \cite{Gleason1958}, while the proof of the converse (which goes through \ref{item-cont-dist}) uses the characterization of Gleason complete flows given in Proposition  \ref{p:mhp-extension-general}. 

We need some preliminaries. 

\begin{lemma} \label{lem-dist-closed-open}
	\label{l:lsc-usc}
Let $G$ be a locally compact, $\sigma$-compact group with a proper pseudo-norm $\nm{\cdot}$. Let $G \actson X$ be a $G$-flow and let $\dtp$ be defined as above. Then the following hold:
	\begin{enumerate}
		\item \label{i:l:lsc} If $F \sub X$ is closed, the function $x \mapsto \dtp(x, F)$ is lsc.
		\item \label{i:l:usc} If $U \sub X$ is open, the function $x \mapsto \dtp(x, U)$ is usc.
	\end{enumerate}
\end{lemma}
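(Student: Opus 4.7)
The plan is to exhibit the relevant sublevel sets of $\dtp(\cdot,F)$ and $\dtp(\cdot,U)$ explicitly, either as a union of open sets (for part (ii)) or as a continuous image of a compact set (for part (i)), so that the desired openness/closedness is manifest.

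For (ii), I would simply unfold the definition of $\dtp$: one has $\dtp(x,U) < r$ if and only if there exist $g \in G$ with $\nm{g} < r$ and $g \cdot x \in U$, equivalently $x \in g^{-1} U$ for some $g \in B_r$. Hence
\[
\set{x \in X : \dtp(x, U) < r} = \bigcup_{g \in B_r} g^{-1} U,
\]
which is open as a union of open sets since each $g$ acts on $X$ as a homeomorphism. This step does not use properness of $\nm{\cdot}$ at all.

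For (i), the key identity is
\[
\set{x \in X : \dtp(x, F) \leq r} = \bar{B}_r \cdot F.
\]
The inclusion $\supseteq$ is immediate from the symmetry $\nm{g^{-1}} = \nm{g}$. For $\subseteq$, the main point is that the infimum defining $\dtp(x,F)$ is attained whenever it is at most $r$: I would choose a net $(g_i)$ in $G$ with $g_i \cdot x \in F$ and $\nm{g_i} \to \dtp(x,F)$, observe that $(g_i)$ eventually lies in the compact ball $\bar{B}_{r+1}$ by properness of $\nm{\cdot}$, and extract a convergent subnet $g_{i_j} \to g$. Continuity of the action and closedness of $F$ then give $g \cdot x \in F$, while continuity of $\nm{\cdot}$ gives $\nm{g} \leq r$, so $x \in g^{-1} F \sub \bar{B}_r \cdot F$.

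Once the identity is proved, $\bar{B}_r \cdot F$ is the image of the compact set $\bar{B}_r \times F$ (here $F$ is compact because it is a closed subset of the compact space $X$) under the continuous action map $G \times X \to X$; hence it is compact and in particular closed in $X$. This shows that $\set{\dtp(\cdot, F) \leq r}$ is closed for every $r \in \R$, which is exactly the lower semi-continuity claim. The only nontrivial ingredient in the whole argument is the net-compactness step in (i), where properness of the pseudo-norm is used; I do not expect any substantial obstacle beyond that.
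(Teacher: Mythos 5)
Your proposal is correct. Part~(ii) is essentially identical to the paper's argument: both identify $\set{x : \dtp(x,U) < r}$ with $B_r^{-1} \cdot U = B_r \cdot U$, an open set. For part~(i) you take a genuinely different (though closely related) route. The paper argues in $X$: it takes a net $(x_i)$ in the sublevel set $A = \set{x : \dtp(x,F) \leq r}$ converging to $x$, picks witnesses $y_i \in F$, passes to a convergent subnet $y_i \to y \in F$, and invokes the previously stated fact that $\dtp$ is jointly lower semi-continuous on $X \times X$ (which is where properness is hidden) to conclude $\dtp(x,y) \leq r$. You instead argue in $G$: you prove the explicit identity $A = \bar{B}_r \cdot F$, using properness of the pseudo-norm to extract a convergent subnet of witnesses $g_i \in \bar{B}_{r+1}$ and continuity of $\nm{\cdot}$ to see that the limit $g$ satisfies $\nm{g} \leq r$ and $g \cdot x \in F$; closedness of $A$ then follows since $\bar{B}_r \cdot F$ is the continuous image of the compact set $\bar{B}_r \times F$. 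Your version has the merit of being self-contained (it does not rely on the unproved assertion that $\dtp$ is lsc on $X \times X$) and of exhibiting the sublevel set concretely; the paper's version is shorter because it delegates the compactness step to that earlier remark. Both arguments are sound, and both correctly locate properness of the pseudo-norm as the only nontrivial ingredient in~(i) and observe that~(ii) needs no properness at all.
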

\begin{proof}
	\ref{i:l:lsc} Let $A = \set{x : \dtp(x, F) \leq r}$. Let $x$ be a limit point of $A$ and let $(x_i, \eps_i)_i$ be a net in $A \times \R^+$ converging to $(x, 0)$. Let $y_i \in F$ be such that $\dtp(x_i, y_i) < r + \eps_i$. By passing to a subnet, we may assume that $y_i \to y \in F$. Then taking limits and using the fact that $\dtp$ is lsc, we obtain that $\dtp(x, y) \leq r$.
	
	\ref{i:l:usc} Let $r > 0$, and let $V$ be the open ball around $1_G$ of radius $r$. Then
	\begin{equation*}
		\dtp(x, U) < r \iff V \cdot x \cap U \neq \emptyset,
	\end{equation*}
	which is an open condition.
\end{proof}

\begin{remark}
	\label{rem:lemma-usc-lsc}
	In fact, Lemma~\ref{l:lsc-usc} does not need $G$ to be locally compact (with the appropriate definition of $\dtp$ in the general case, see \cite{Zucker2021}). The proof of \ref{i:l:lsc} works as above and \ref{i:l:usc} is \cite{Zucker2021}*{Theorem~4.8} and it is harder.
\end{remark}

\begin{lemma} \label{lem-car-loc-sub}
	Let $X$ be a $G$-flow. Then $X$ satisfies condition \ref{item-car-loc} of  Theorem 	\ref{th:char-distance} if and only if for every open neighborhood $V \ni 1_G$ and open subset $U \sub X$, there exists an open neighborhood $V' \ni 1_G$  with $V' \sub V$ such that $V' \overline{U}$ is open. 	 
\end{lemma}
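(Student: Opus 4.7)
The forward implication is immediate by taking $V' = V$. For the converse, the plan is to show that $V\overline U$ is a neighborhood of each of its points, by transporting the local condition at $1_G$ to an arbitrary $g \in V$ using the fact that the action of $g$ is a homeomorphism of $X$.

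Concretely, I would fix an open neighborhood $V \ni 1_G$ and an open subset $U \sub X$, take a point $x \in V\overline U$, and write $x = g \cdot y$ with $g \in V$ and $y \in \overline U$. By continuity of left multiplication in $G$, there exists an open neighborhood $W \ni 1_G$ such that $gW \sub V$. Applying the hypothesis to the pair $(W, U)$, I obtain an open neighborhood $V' \ni 1_G$ with $V' \sub W$ and such that $V'\overline U$ is open in $X$.

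The last step is to translate this open set back by $g$. Since $1_G \in V'$, we have $y \in V'\overline U$, hence $x = g \cdot y \in g \cdot (V'\overline U) = (gV')\overline U \sub (gW)\overline U \sub V\overline U$, where the identity $g \cdot (V'\overline U) = (gV')\overline U$ uses associativity of the action. Because $g$ acts on $X$ as a homeomorphism, $g \cdot (V'\overline U)$ is open, so it is an open neighborhood of $x$ contained in $V\overline U$. Varying $x$ shows that $V\overline U$ is open.

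There is essentially no obstacle here: the lemma is really the observation that, since the acting group is homogeneous under translation and acts by homeomorphisms, the property ``$V \overline U$ is open'' is a local property at $1_G \in G$. The only point to be careful with is writing the translation identity $g \cdot (V'\overline U) = (gV')\overline U$ correctly, so that the translated open neighborhood lands inside $(gW)\overline U$, and not merely inside some larger set.
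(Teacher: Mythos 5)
Your proof is correct and is essentially the argument in the paper: the paper covers $V$ by the translates $gV'_g$ and writes $V\overline{U}$ as the union of the open sets $g(V'_g\overline{U})$, which is just the global form of your pointwise neighborhood argument. Both rest on the same idea of transporting the local hypothesis at $1_G$ to an arbitrary $g \in V$ via the homeomorphism $g$ acts by.
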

\begin{proof}
	We only have to prove the implication from right to left. Suppose that the property in the statement holds, and let $U$ be an open subset of $X$ and $V$ an open neighborhood of $1_G$. For every $g$ in $V$ one can find $V'_g$ an open neighborhood of $1_G$ such that $g V'_g$ is contained in $V$ and $V'_g \overline{U}$ is open. Writing $V = \bigcup_{g \in V} V'_g$, we then have $V \overline{U} =  \bigcup_{g \in V} V'_g \overline{U}$, which is this thus open.
\end{proof}

\begin{lemma} \label{lem-open-intersect}
	Let $X$ be a $G$-flow that satisfies condition \ref{item-car-loc} of  Theorem 	\ref{th:char-distance}. Then for all open subsets $U_1, U_2 \sub X$, we have $\overline{U_1} \cap \overline{U_2} \neq \emptyset$ if and only if $V U_1 \cap U_2 \neq \emptyset$ for every open neighborhood $V \ni 1_G$.
\end{lemma}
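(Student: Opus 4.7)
The plan is to prove the two implications separately. The forward direction will use condition \ref{item-car-loc}; the backward direction is essentially an extraction-of-subnet argument relying on joint continuity of the $G$-action on $X$ and compactness of $X$, and does not use the hypothesis at all.

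For the forward direction, I would fix a point $x \in \overline{U_1} \cap \overline{U_2}$ together with an open neighborhood $V$ of $1_G$. By condition \ref{item-car-loc}, $V\overline{U_1}$ is open; since $1_G \in V$, it contains $x$, so it is an open neighborhood of $x$. As $x \in \overline{U_2}$, we get $V\overline{U_1} \cap U_2 \neq \emptyset$. Pick $y = gu$ in this intersection, with $g \in V$ and $u \in \overline{U_1}$. To finish, I need to replace $u$ by a point of $U_1$: joint continuity of the action at $(g,u)$, together with $gu = y \in U_2$, yields an open neighborhood $W$ of $u$ with $gW \sub U_2$. Since $u \in \overline{U_1}$, the intersection $W \cap U_1$ is nonempty, and any $u_1$ in it satisfies $g u_1 \in VU_1 \cap U_2$.

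For the backward direction, suppose $VU_1 \cap U_2 \neq \emptyset$ for every open neighborhood $V$ of $1_G$, and consider the net of such $V$ ordered by reverse inclusion. For each $V$, choose $y_V = g_V u_V \in VU_1 \cap U_2$ with $g_V \in V$ and $u_V \in U_1$. By compactness of $X$, a subnet $(u_{V_\alpha})$ converges to some $u$, which necessarily lies in $\overline{U_1}$. Since $g_{V_\alpha} \to 1_G$ by construction and $u_{V_\alpha} \to u$, joint continuity of the action gives $y_{V_\alpha} = g_{V_\alpha} u_{V_\alpha} \to 1_G \cdot u = u$; and since each $y_V \in U_2$, the limit $u$ also lies in $\overline{U_2}$, as required.

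The only mildly delicate point, rather than a real obstacle, is the passage from $u \in \overline{U_1}$ to a genuine point of $U_1$ in the forward direction; this is what forces one to invoke joint continuity of the action (rather than the mere fact that each $g$ acts as a homeomorphism) in order to trade the closure for the open set.
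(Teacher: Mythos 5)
Your proof is correct and follows essentially the same route as the paper's: condition \ref{item-car-loc} makes $V\overline{U_1}$ open, hence it meets $U_2$, and one then trades $\overline{U_1}$ for $U_1$ using the action of a single group element, while the converse is exactly the standard continuity-plus-compactness argument that the paper dismisses as ``a general fact''. One small correction to your closing remark: the forward direction does \emph{not} require joint continuity --- for fixed $g$ the set $g^{-1}U_2$ is open and meets $\overline{U_1}$, hence meets $U_1$ (this is the paper's step ``$\overline{U_1}\cap V^{-1}U_2\neq\emptyset$ implies $U_1\cap V^{-1}U_2\neq\emptyset$'') --- whereas joint continuity is genuinely used only in your net argument for the converse.
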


\begin{proof}
	Suppose $\overline{U_1} \cap \overline{U_2} \neq \emptyset$, and let $V$ be an open neighborhood of $1_G$. Then clearly $V \overline{U_1} \cap \overline{U_2} \neq \emptyset$. Since $V \overline{U_1}$ is open, this implies that $V \overline{U_1} \cap U_2 \neq \emptyset$. That condition is equivalent to $\overline{U_1} \cap V^{-1} U_2 \neq \emptyset$ and hence implies that $U_1 \cap V^{-1} U_2 \neq \emptyset$. So $V U_1 \cap U_2 \neq \emptyset$, as desired. The reverse implication is a general fact that follows from continuity of the $G$-action.  
\end{proof}

Recall that a subalgebra $A$ of a Boolean algebra $B$ is \df{dense} if for every non-zero element in $B$ there is a non-zero element in $A$ that is smaller. We recall the following (see \cite{Koppelberg1989}*{Theorem~4.19}).

\begin{lemma} \label{lem-boolean}
	Let $A$ be a dense subalgebra of a Boolean algebra $B$. If $A$ is complete, then $A = B$.
\end{lemma}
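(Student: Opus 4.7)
The plan is to show that every $b \in B$ is already captured by a supremum computed inside $A$, which by completeness exists. Concretely, I would fix $b \in B$, consider the set
\[
S_b := \set{a \in A : a \leq b},
\]
and use completeness of $A$ to define $a^* := \bigvee_A S_b \in A$. The whole proof reduces to verifying that $a^* = b$.

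For the inequality $a^* \leq b$, I would argue by contradiction, assuming $a^* \wedge \neg b \neq 0$ in $B$. Density of $A$ in $B$ then produces a nonzero $c \in A$ with $c \leq a^* \wedge \neg b$. The key observation is that for every $a \in S_b$ one has $a \wedge c \leq b \wedge \neg b = 0$, so $a \leq \neg c$. Therefore $a^* \wedge \neg c$, which lies in $A$, is an upper bound in $A$ for $S_b$. Minimality of $a^*$ as the supremum of $S_b$ in $A$ forces $a^* \leq a^* \wedge \neg c$, hence $a^* \wedge c = 0$. Combined with $c \leq a^*$ this yields $c = 0$, a contradiction.

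For the reverse inequality $b \leq a^*$, I would again suppose the contrary so that $b \wedge \neg a^* \neq 0$ in $B$, and invoke density once more to produce a nonzero $c \in A$ below both $b$ and $\neg a^*$. But $c \leq b$ and $c \in A$ mean $c \in S_b$, so $c \leq a^*$, which together with $c \leq \neg a^*$ gives $c = 0$, a contradiction. Hence $b = a^* \in A$.

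The only subtle point, and the place that really uses density (rather than just completeness), is the first direction: a priori the supremum of $S_b$ computed in $A$ need not be $\leq b$, because $b$ itself might not be available as an upper bound inside $A$. Density bridges exactly this gap by allowing one to replace the putative excess $a^* \wedge \neg b$ by an element of $A$.
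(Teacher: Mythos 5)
Your proof is correct and complete: taking $a^* = \bigvee_A \{a \in A : a \leq b\}$ and using density twice (once to rule out excess $a^* \wedge \neg b$, once to rule out a deficit $b \wedge \neg a^*$) is exactly the standard argument. The paper itself gives no proof of this lemma, only a citation to Koppelberg's Handbook (Theorem~4.19), and your argument is the one found there, with the one genuinely delicate point---that the supremum is taken in $A$, so upper bounds must be exhibited inside $A$---handled correctly.
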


Before starting the proof of Theorem \ref{th:char-distance}, we also introduce some notation. If $\pi \colon Y \to X$ is a continuous map between topological spaces and $U \sub Y$ is open, we denote by $\pi_*(U)$ the \df{fiber image} of $U$:
\begin{equation*}
  \pi_*(U) \coloneqq \set{x \in X : \pi^{-1}(x) \sub U}.
\end{equation*}
If the space $Y$ is compact, the set $\pi_*(U)$ is always open, and if $\pi$ is irreducible, $\pi_*(U)$ is non-empty for any non-empty $U$.

\begin{proof}[Proof of Theorem 	\ref{th:char-distance}]
  \ref{item-car-loc} $\Rightarrow$ \ref{item-defGleason complete}. Let $\pi \colon Y \to X$ be an irreducible extension. We shall prove that $\pi$ is injective. Suppose for a contradiction that there exist distinct points $y_1,y_2$ in $Y$ with the same image $x$ in $X$. Then one can find an open $V \ni 1_G$ and open subsets $O_1,O_{2} \sub Y$ such that $y_1 \in O_1$, $y_2 \in O_{2}$ and $V O_1 \cap O_{2} = \emptyset$. The irreducibility of $\pi$ implies that $\pi(O) \sub \cl{\pi_*(O)}$ for any open $O \sub Y$. Indeed, if not, there is $y \in O$ and an open $W \ni \pi(y)$ disjoint from $\pi_*(O)$. By irreducibility, $\pi^{-1}(W) \cap O$ contains a fiber, whose image must be in $\pi_*(O)$, contradiction. Thus the sets $\overline{\pi_*(O_1)}$ and $\overline{\pi_*(O_2)}$ both contain $x$.
  Hence by the assumption \ref{item-car-loc} and Lemma~\ref{lem-open-intersect}, we have $V \pi_*(O_1) \cap \pi_*(O_2) \neq \emptyset$. Since $V \pi_*(O_1) = \pi_*(V O_1)$, we deduce that $V O_1$ and $O_{2}$ intersect each other, which is a contradiction. 

\ref{item-cont-dist}  $\Rightarrow$ \ref{item-car-loc}. Let $V$ be an open neighborhood of $1_G$, and $U$ an open subset of $X$. By Lemma  \ref{lem-car-loc-sub}, upon replacing $V$ by $V \cap L$ we can assume that $V$ is contained in $L$. Applying Proposition \ref{prop-exist-contin-pseudonorm}, we can find a continuous proper pseudo-norm on $L$ and $r > 0$ such that $B_{r}$ is contained in $V$. If $\dtp$ is the pseudo-metric on $X$ associated to this pseudo-norm, by assumption, the function $f(x) \coloneqq \dtp(x, \cl{U})$ is continuous. So $B_{r} \overline{U} = \left\lbrace f < r \right\rbrace$ is open. Since $B_{r} \subseteq V$ and $V$ was arbitrary, Lemma  \ref{lem-car-loc-sub} ensures that \ref{item-car-loc} holds.

 \ref{item-defGleason complete} $\Rightarrow$ \ref{item-cont-dist}. Fix a continuous proper pseudo-norm on $L$, and an open subset $U$ of $X$.  Let $\phi_0(x) = \dtp(x, \cl{U})$ and $\phi_1(x) = \dtp(x, U)$. We have that $\phi_0$ is lsc and $\phi_1$ is usc by Lemma \ref{lem-dist-closed-open}. Moreover, $\phi_0 \leq \phi_1$, and both $\phi_0$ and $\phi_1$ are $\dtp$-contractive (meaning that $|\phi_i(x) - \phi_i(y)| \leq \dtp(x,y)$ for all $x,y$). First we show that the set $\set{\phi_0 < \phi_1}$ is meager. Note that
 \begin{equation*}
 \set{\phi_0 < \phi_1} = \bigcup_{q_1 < q_2 \in \Q} \set{\phi_0 \leq q_1 < q_2 \leq \phi_1}
 \end{equation*}
 and each set in the union is closed. So if $\set{\phi_0 < \phi_1}$ is non-meager, there exist $q_1 < q_2$ such that $\set{\phi_0 \leq q_1 < q_2 \leq \phi_1}$ has non-empty interior $W$. The set $\set{x : \dtp(x, W) < q_2}$ is open and intersects $\cl{U}$, so it must intersect $U$. So there exist $x \in U$, $y \in W$ with $\dtp(x, y) < q_2$, which contradicts the definition of $W$.
 
 Now for $r > 0$, set $\phi_{0,r}= \min(\phi_0,r)$ and  $\phi_{1,r} = \min(\phi_1,r)$. The functions $\phi_{0,r},\phi_{1,r}$ are bounded and remain $\dtp$-contractive (hence $L$-continuous). As $X$ is Gleason complete as a $G$-flow by assumption, it is also Gleason complete as a $L$-flow by Corollary \ref{cor-Gleason complete-localcondition}. So by Proposition \ref{p:mhp-extension-general}, there exists a continuous function $\theta$ on $X$ such that $\phi_{0,r} = \phi_{1,r} = \theta$ on a comeager set. As the sets $\set{\theta < \phi_{0,r}}$ and $\set{\theta > \phi_{1,r}}$ are open, they must be empty, and we must have that $\phi_{0,r}  \leq \theta \leq \phi_{1,r}$. We claim that $\theta$ is $\dtp$-contractive. If not, there exist $x \in X$ and $g \in L$ such that $|\theta(x) - \theta(g \cdot x)| > \nm{g}$. However, the set $\set{x : |\theta(x) - \theta(g \cdot x)| > \nm{g}}$ is open, so as $\theta = \phi_{0,r}$ on a comeager set, there exists $x$ such that $|\phi_{0,r}(x) - \phi_{0,r}(g \cdot x)| > \nm{g}$, contradiction. Note that $\theta^{-1}(0) \supseteq U$, so by continuity, $\theta^{-1}(0) \supseteq \cl{U}$. As $\theta$ is $\dtp$-contractive and $\theta = 0$ on $\cl{U}$, for every $x \in X$, we have
 \begin{equation*}
 \theta(x) \leq \inf_{y \in \cl{U}} \dtp(x, y) = \dtp(x, \cl{U}) =  \phi_0(x).
 \end{equation*}
 Since $\theta \leq r$, this shows that $\theta \leq \phi_{0,r}$, and hence $\theta = \phi_{0,r}$. So $\phi_{0,r}$ is continuous. Since $r$ is arbitrary, it follows that $\phi_{0}$ is continuous. 

We now assume $G$ is a tdlc group. The equivalence between \ref{item-defGleason complete} and \ref{item-RO-tdlc} follows from Corollary~\ref{c:char-mhp}. Recall in particular that these imply that $X$ is zero-dimensional. Hence the fact that  \ref{item-RO-tdlc} implies \ref{item-complete-tdlc} is clear since $\RO_V(X)$ is always complete. It remains to see that \ref{item-complete-tdlc} implies \ref{item-RO-tdlc}. To that end, let $V$ be a compact open subgroup of $G$. We want to see that $\RO_V(X) = \Clopen_V(X)$. We claim that $\Clopen_V(X)$ is a dense subalgebra of $\RO_V(X)$. Indeed, if $U$ is a non-empty element of $\RO_V(X)$, then we can find a non-empty clopen subset $U_1$ inside $U$ since $X$ is zero-dimensional. Since $V$ is compact and open, the stabilizer of $U_1$ has finite index in $V$, so that $VU_1$ is a union of finitely many clopen subsets, and hence is clopen. Moreover $VU_1 \subseteq U$ since $U$ is $V$-invariant. Hence $\Clopen_V(X)$ is dense in $\RO_V(G, X)$. Since we make the assumption that $\Clopen_V(X)$ is complete, Lemma~\ref{lem-boolean} implies that $\RO_V(X) = \Clopen_V(X)$, as desired. 
\end{proof}

 Compare the next corollary with \cite{BenYaacov2017}*{Lemma~2.4}.
\begin{cor}
  \label{c:d-closure}
  Let $G$ be a locally compact, $\sigma$-compact group equipped with a proper pseudo-norm $\nm{\cdot}$, and let $G \actson X$ be a Gleason complete flow. Then for $U_1, U_2 \sub X$ open,
  \begin{equation*}
    \dtp(\cl{U_1}, \cl{U_2}) = \dtp(U_1, U_2).
  \end{equation*}
\end{cor}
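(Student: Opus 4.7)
The plan is as follows. The inequality $\dtp(\cl{U_1}, \cl{U_2}) \leq \dtp(U_1, U_2)$ is immediate from $U_i \sub \cl{U_i}$, so I focus on the reverse. We may assume $r \coloneqq \dtp(\cl{U_1}, \cl{U_2}) < \infty$, since otherwise both sides equal $\infty$. Fix $\eps > 0$; by definition of $r$, I pick $\bar x \in \cl{U_1}$ and $g \in G$ with $g \cdot \bar x \in \cl{U_2}$ and $\nm{g} < r + \eps/3$.

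The key tool is condition \ref{item-car-loc} of Theorem~\ref{th:char-distance}: for any open neighborhood $V$ of $1_G$ and any open $U \sub X$, the set $V \cl{U}$ is open. I would first upgrade this to the assertion that $V U$ is a \emph{dense} open subset of $V \cl{U}$. This is immediate: given $z = h \cdot y \in V \cl{U}$ (with $h \in V$ and $y \in \cl{U}$) and any open neighborhood $O$ of $z$ in $X$, the set $h^{-1} O$ is an open neighborhood of $y$, hence meets $U$; translating back by $h$ produces a point of $V U \cap O$.

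Setting $V = B_{\eps/3}$, I then consider the open set $W = V \cl{U_1} \cap g^{-1} V \cl{U_2}$, which contains $\bar x$ and so is non-empty. Because $W$ is open in the compact Hausdorff space $X$, it is locally compact Hausdorff, hence Baire. Both $V U_1 \cap W$ and $g^{-1} V U_2 \cap W$ are open and dense in $W$ by the previous paragraph, so the Baire category theorem furnishes a point $z \in V U_1 \cap g^{-1} V U_2$. Writing $z = h_1 \cdot x_1 = g^{-1} h_2 \cdot x_2$ with $h_1, h_2 \in V$, $x_1 \in U_1$, $x_2 \in U_2$, one obtains $x_2 = h_2^{-1} g h_1 \cdot x_1$, hence
\[
  \dtp(x_1, x_2) \leq \nm{h_2^{-1} g h_1} \leq \nm{h_1} + \nm{g} + \nm{h_2} < \eps/3 + (r + \eps/3) + \eps/3 = r + \eps,
\]
and letting $\eps \to 0$ gives $\dtp(U_1, U_2) \leq r$.

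No step looks particularly delicate once Theorem~\ref{th:char-distance} is in hand. The main temptation to avoid is trying to deduce the corollary directly from continuity of $x \mapsto \dtp(x, \cl{U_2})$ (condition \ref{item-cont-dist}): this does yield $\inf_{x \in U_1} \dtp(x, \cl{U_2}) = \inf_{x \in \cl{U_1}} \dtp(x, \cl{U_2}) = r$, but the map $\dtp(x, \cdot)$ is only lower semi-continuous in its second argument, so one cannot swap $\cl{U_2}$ for $U_2$ by the same trick. The Baire-category argument above sidesteps this by enlarging both closures by a small open $V$ and then shrinking $V$.
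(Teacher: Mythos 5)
Your proof is correct, but it takes a genuinely different route from the paper's. The paper stays entirely within condition \ref{item-cont-dist} of Theorem~\ref{th:char-distance}: assuming $\dtp(\cl{U_1}, \cl{U_2}) < r$, the open set $\set{x : \dtp(x, \cl{U_2}) < r}$ meets $\cl{U_1}$ and hence $U_1$; one then picks a non-empty open $W$ with $\cl{W} \sub U_1$ inside that set and applies continuity of $\dtp(\cdot, \cl{W})$ a \emph{second} time, with the roles of the two sets reversed, to produce a point of $U_2$ at distance $< r$ from $\cl{W} \sub U_1$. This circumvents exactly the lower-semicontinuity obstruction you flag at the end --- not by abandoning condition \ref{item-cont-dist}, but by symmetrizing the argument. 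Your proof instead works from condition \ref{item-car-loc} (openness of $V\cl{U}$, which for $V = B_r$ is the same fact as openness of $\set{x : \dtp(x,\cl{U}) < r}$), upgrades it to the density of $VU$ in $V\cl{U}$, and intersects two dense open subsets of the non-empty open set $W = V\cl{U_1} \cap g^{-1}V\cl{U_2}$. All the steps check out; the only superfluous move is the appeal to the Baire category theorem --- the intersection of \emph{two} dense open subsets of any topological space is already dense, so the local compactness of $W$ plays no role. The paper's argument is slightly more economical; yours makes the underlying mechanism (fatten the closures by a small $V$, exploit density of the open cores, then let $V$ shrink) more explicit and quantitative.
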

\begin{proof}
  Suppose that $\dtp(\cl{U_1}, \cl{U_2}) < r$. Consider the set $\set{x : \dtp(x, \cl{U_2}) < r}$. By Theorem~\ref{th:char-distance}, it is open and it intersects $\cl{U_1}$, so it intersects $U_1$. Let $W \sub \set{x : \dtp(x, \cl{U_2}) < r}$ be open, non-empty with $\cl{W} \sub U_1$. Then by continuity of the function $\dtp(\cdot, \cl{W})$, there exists $x \in U_2$ with $\dtp(x, \cl{W}) < r$. So $\dtp(U_1, U_2) < r$.
\end{proof}


\section{Continuity of the stabilizer map}
\label{sec:cont-stab-map}

Let $Y$ be a locally compact space and let $2^Y$ denote the space of closed subsets of $Y$. The \df{Chabauty topology} on $2^Y$ is given by the subbasis of sets of the form
\begin{equation*}
O_K = \set{F \in 2^Y : F \cap K = \emptyset} \quad \And \quad  O^U = \set{F \in 2^Y : F \cap U \neq \emptyset}
\end{equation*}
with $K \sub Y$ compact and $U \sub Y$ open. The space $2^Y$ equipped with this topology is compact. A map $\phi \colon X \to 2^Y$ is \df{upper semi-continuous} if $\phi^{-1}(O_K)$ is open for every compact subset $K$ of $Y$ and it is \df{lower semi-continuous} if $\phi^{-1}(O^U)$ is open for every open subset $U$ of $Y$. 

If $G$ is a locally compact group, the set $\Sub(G)$ of closed subgroups of $G$ is closed in $2^G$, and hence, a compact space. Moreover, the conjugation action of $G$ on $\Sub(G)$ is continuous. 

\begin{defn}
Let $X$ be a $G$-flow. For $x \in X$, let $G_x$ denote the stabilizer of $x$. The map $\Stab \colon X \to \Sub(G)$ defined by $\Stab(x) = G_x$ is called the \df{stabilizer map} associated to the flow $X$.
\end{defn}

It is easy to see that for every $G$-flow, the stabilizer map is $G$-equivariant and upper semi-continuous (see, e.g., \cite{Glasner2015}). It is also well-known that in general it is not continuous. The main theorem of the paper is the following.

\begin{theorem} \label{thm-stab-cont-mhp}
	Let $G$ be a locally compact group and let $X$ be a Gleason complete $G$-flow. Then the stabilizer map $X \to \mathrm{Sub}(G)$, $x \mapsto G_x$ is continuous.
\end{theorem}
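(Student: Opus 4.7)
The plan is to reduce the theorem to showing that a certain set defined via the action pseudo-metric is a neighborhood of $x$, and then to derive a contradiction via the closure identity of Corollary~\ref{c:d-closure}, in the spirit of Frol\'{\i}k's proof.

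Upper semi-continuity of $x \mapsto G_x$ is standard, so the task is lower semi-continuity: fix $g \in G_x$ and an open $V \ni g$ in $G$; I must produce an open $W \ni x$ with $G_y \cap V \neq \emptyset$ for all $y \in W$. Passing to an open $\sigma$-compact subgroup $L \leq G$ containing $g$, Corollary~\ref{cor-Gleason complete-localcondition} ensures that $X$ remains Gleason complete as an $L$-flow, so by Proposition~\ref{prop-exist-contin-pseudonorm} one can equip $L$ with a proper normal pseudo-norm; let $\dtp$ be the associated action pseudo-metric on $X$. Choosing $\eps > 0$ with $B_\eps g \subseteq V$, the task becomes showing that $\set{y \in X : \dtp(y, gy) < \eps}$ is a neighborhood of $x$, since $\dtp(y, gy) < \eps$ is equivalent to $G_y \cap B_\eps g \neq \emptyset$.

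I would argue by contradiction. Suppose there is a net $(y_i)$ with $y_i \to x$ and $\dtp(y_i, gy_i) \geq \eps$; by continuity of the action, $gy_i \to x$ as well. A preliminary reduction lets one assume $\dtp(y_i, x), \dtp(gy_i, x) \geq \delta_0 > 0$ uniformly along a subnet. Indeed, if $\dtp(y_i, x) \to 0$, then eventually $y_i = h_i x$ with $\nm{h_i} \to 0$, and the fact that $g \in G_x$ yields
\[
  \dtp(y_i, gy_i) = d_r(g, h_i G_x h_i^{-1}) \longrightarrow d_r(g, G_x) = 0,
\]
using continuity of conjugation and of the pseudo-norm, which contradicts $\dtp(y_i, gy_i) \geq \eps$; the symmetric case $\dtp(gy_i, x) \to 0$ reduces to this via $g^{-1} x = x$.

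With this bound, lower semi-continuity of $\dtp$ (Lemma~\ref{l:lsc-usc}) implies that the sets $\set{y : \dtp(y, gy_{i_j}) > \delta_0/2}$ and $\set{y : \dtp(gy, y_{i_j}) > \delta_0/2}$ are open neighborhoods of $x$, which permits the inductive extraction of a subnet $(y_{i_k})$ such that $\dtp(y_{i_k}, gy_{i_j}) > \delta_0/2$ and $\dtp(gy_{i_k}, y_{i_j}) > \delta_0/2$ for all $j < k$. Choosing compatibly shrunk open neighborhoods $A_k \ni y_{i_k}$, $B_k \ni gy_{i_k}$ so that $\dtp(A_k, B_\ell) \geq \delta$ uniformly, one sets $A = \bigcup_k A_k$ and $B = \bigcup_k B_k$; then $x \in \cl{A} \cap \cl{B}$ (since $y_{i_k}, gy_{i_k} \to x$) while $\dtp(A, B) \geq \delta > 0$. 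Corollary~\ref{c:d-closure} gives $\dtp(\cl{A}, \cl{B}) = \dtp(A, B) > 0$, contradicting $x \in \cl{A} \cap \cl{B}$. The main obstacle is the compatible shrinking step enforcing the uniform cross-index separation $\dtp(A_k, B_\ell) \geq \delta$ for all $k, \ell$, and not only for pairs fixed at the induction stage; this requires a delicate diagonal construction along the subnet, after which the contradiction is immediate.
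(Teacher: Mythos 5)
Your setup is sound and your target is the right one: reducing to lower semi-continuity, passing to a $\sigma$-compact open subgroup with a proper normal pseudo-norm, and reformulating the problem as showing that $\set{y : \dtp(y, g\cdot y) < \eps}$ is a neighborhood of each fixed point $x$ of $g$ are all exactly as in the paper, and your preliminary reduction (every $y$ with $\dtp(y, g\cdot y) \geq \eps$ satisfies $\dtp(y,x), \dtp(g \cdot y, x) \geq \delta_0$, via normality of the pseudo-norm) is correct. The intended endgame --- two open sets $A, B$ with $\dtp(A,B) \geq \delta > 0$ and $x \in \cl{A} \cap \cl{B}$, contradicting Corollary~\ref{c:d-closure} --- would indeed finish the proof. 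But the step you defer as ``a delicate diagonal construction'' is not a technicality: it is the entire content of the theorem, and the construction you sketch does not go through. First, extracting a countable subfamily $(y_{i_k})$ from a net converging to $x$ gives no control on whether $x \in \cl{\set{y_{i_k} : k}}$. Gleason complete flows are essentially never first countable (for discrete $G$ they are extremally disconnected, where every convergent sequence is eventually constant), so a sequential extraction generically loses the accumulation at $x$ that your final contradiction requires. Second, if you instead keep an uncountable or maximal family of pairs $(A_j, B_j)$, then adding a new pair requires $\dtp(A_*, B_j) \geq \delta$ and $\dtp(A_j, B_*) \geq \delta$ \emph{simultaneously for all existing $j$}; each such condition is open in the new point, but an infinite intersection of open conditions need not be a neighborhood of $x$, and a bound like $\dtp(x, \cl{B_j}) \geq 3\delta$ for each $j$ does not yield $\dtp\bigl(x, \cl{\bigcup_j B_j}\bigr) > 0$, since the closure of an infinite union can pick up points $\dtp$-close to $x$ that lie in no single $\cl{B_j}$.

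The paper's Lemma~\ref{l:construct-function} is designed precisely to defeat this obstruction, and the mechanism is absent from your proposal: properness of the pseudo-norm gives, by a covering/pigeonhole argument, a uniform bound $n = k + \ell + 1$ on how many $\delta$-separated sets can be ``$\dtp$-close'' to a given point, so one can work with a maximal family of a \emph{fixed finite} length. Maximality then forces the $\delta$-neighborhood of the union to cover $M_r = \set{x : \dtp(g\cdot x, x) > r}$ (here the finiteness makes the enlargement step legitimate: the pigeonhole guarantees some index $j$ whose constraints the new piece does not violate), and the finitely many continuous functions $\dtp(\cdot, \cl{U_i})$ assemble into a single continuous witness $\phi$, valid at every fixed point of $g$ at once rather than at one $x$ via contradiction. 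So your route is genuinely different in flavor (local, by contradiction through Corollary~\ref{c:d-closure}, rather than global through an explicit continuous function), but as written it has a gap exactly where the quantitative Frol\'{\i}k-type combinatorics must enter; to close it you would need to import the finiteness/pigeonhole argument, at which point you are reconstructing Lemma~\ref{l:construct-function}.
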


The remainder of this section is devoted to the proof of the theorem. Let $\nm{\cdot} \colon G \to \R_+$  be a pseudo-norm on $G$. We recall that $B_r$ denotes the set of elements $g \in G$ such that $\nm{g} < r$; we also let $\bar{B}_{r}$ be the set of elements $g \in G$ such that $\nm{g} \leq r$. Recall that if $X$ is a $G$-flow, we associated, by the equation \eqref{eq:dfn-dtp}, a pseudo-metric $\dtp$ on $X$. The following is the main lemma.



\begin{lemma} \label{l:construct-function}
  Let $G$ be a locally compact, $\sigma$-compact group and let $\nm{\cdot}$ be a continuous, proper, normal pseudo-norm on $G$. Let $X$ be a Gleason complete $G$-flow, let $g \in G$ and $r > 0$. Then there exist $n \geq 1$ and a continuous function $\phi \colon X \to \R^n$ such that for all $x \in X$
  \begin{equation}
    \label{eq:prop-phi}
  \partial(g \cdot x,x) > r \implies \nm{\phi(g \cdot x) - \phi(x)}_{\infty} \geq r/3.
\end{equation}
\end{lemma}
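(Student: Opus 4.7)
The plan is to construct $\phi$ as a tuple of continuous functions of the form $\psi_U(x) := \min\{\partial(x, \overline{U}), r/3\}$ for carefully chosen open sets $U \subseteq X$; each such $\psi_U$ is continuous by the equivalence \ref{item-defGleason complete} $\Leftrightarrow$ \ref{item-cont-dist} of Theorem~\ref{th:char-distance}. The central task is to identify enough open sets $U$ so that, whenever $\partial(g \cdot x, x) > r$, at least one $\psi_U$ in the tuple distinguishes $x$ and $g \cdot x$ by at least $r/3$.

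First, I would produce, for each $x_0 \in X$ satisfying $\partial(g \cdot x_0, x_0) > r$, an open neighborhood $U_{x_0} \ni x_0$ with the property that $|\psi_{U_{x_0}}(g \cdot x) - \psi_{U_{x_0}}(x)| = r/3$ for every $x \in U_{x_0}$. Since $g \cdot x_0 \notin \overline{B}_r \cdot x_0$ and the latter is compact (by properness of $\nm{\cdot}$), I pick an open $V_{x_0} \ni g \cdot x_0$ with $\overline{V_{x_0}} \cap \overline{B}_r \cdot x_0 = \emptyset$. The closed set $W_{x_0} := \overline{B}_{r/3} \cdot \overline{V_{x_0}}$ then avoids $x_0$ (otherwise $x_0 = hv$ with $h \in \overline{B}_{r/3}$ and $v \in \overline{V_{x_0}}$ would give $v = h^{-1} x_0 \in \overline{V_{x_0}} \cap \overline{B}_r \cdot x_0$). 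By regularity of $X$ one finds an open $U_{x_0} \ni x_0$ with $\overline{U_{x_0}} \cap W_{x_0} = \emptyset$, and by continuity of the $g$-action I can further arrange $g \cdot \overline{U_{x_0}} \subseteq V_{x_0}$. For $x \in U_{x_0}$ this gives $\psi_{U_{x_0}}(x) = 0$ (since $x \in \overline{U_{x_0}}$), while $g \cdot x \in V_{x_0}$ combined with $\overline{U_{x_0}} \cap \overline{B}_{r/3} \cdot \overline{V_{x_0}} = \emptyset$ forces $\partial(g \cdot x, \overline{U_{x_0}}) \geq r/3$, whence $\psi_{U_{x_0}}(g \cdot x) = r/3$.

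Second, I would extract a finite subfamily. Setting $A := \{x \in X : \partial(g \cdot x, x) > r\}$ (an open subset of $X$ by lower semi-continuity of $\partial$), the sets $\{U_{x_0}\}_{x_0 \in A}$ cover $A$; I would supplement them by opens covering the closed complement $X \setminus A$ to obtain an open cover of the compact space $X$. Compactness then yields finitely many opens, and $\phi$ is the tuple of the corresponding $\psi_{U_{x_i}}$'s (zero in the coordinates indexed by supplementary opens). The main technical obstacle lies here: because $A$ is open but generally not compact, one must ensure that every $x \in A$ lies in some ``good'' $U_{x_i}$ in the finite subcover, not only in a supplementary open. A clean way to arrange this is to take the supplementary opens inside $X \setminus \overline{A}$ wherever possible (i.e.\ away from the boundary of $A$), and to handle the boundary points of $A$ by including sufficiently many good $U_{x_0}$'s from neighborhoods of each such point in the subcover.
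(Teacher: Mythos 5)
Your local step is correct and cleaner than the paper's: for a single $x_0$ with $\partial(g\cdot x_0,x_0)>r$ you do produce a neighborhood $U_{x_0}$ on which $\psi_{U_{x_0}}$ separates $x$ from $g\cdot x$ by $r/3$, using only properness of the norm and the continuity of $x\mapsto\partial(x,\cl{U})$ from Theorem~\ref{th:char-distance}. The gap is in the globalization, exactly where you flag it, and the workaround you sketch does not close it. The set $A=M_r=\set{x:\partial(g\cdot x,x)>r}$ is open and in general not compact, and at a boundary point $y\in\cl{A}\sminus A$ the (only lower semi-continuous) function $x\mapsto\partial(g\cdot x,x)$ may drop to $r$ or below; as $x_0\to y$ inside $A$, the separation between $g\cdot x_0$ and $\bar B_r\cdot x_0$ can degenerate and force $U_{x_0}$ to shrink to a point. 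Consequently no finite subfamily of the $U_{x_0}$ need cover $A$, and any open set covering $y$ in a finite subcover of $X$ necessarily spills into $A$, leaving points of $A$ covered only by ``bad'' opens. ``Including sufficiently many good $U_{x_0}$'s from neighborhoods of each boundary point'' is precisely the step that may require infinitely many sets, so as written this is not a proof.

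The paper circumvents this with a genuinely different global mechanism, and it is worth seeing why each ingredient is there. Normality of the pseudo-norm (which your argument never uses) gives uniform continuity of $g^{\pm 1}$ on $(X,\dtp)$, and properness gives covering numbers of $\bar B_{2r}$; together these yield an \emph{a priori} bound $n=k+\ell+1$ on how many pairwise $\delta$-separated open sets, each $r$-separated from its $g$-translate, one can need. A Zorn's lemma argument produces a maximal such family $(U_i)_{i\le n}$ (these are large, built-up open sets, not small neighborhoods of single points), and a pigeonhole argument shows maximality forces the \emph{closure} of $\bigcup_i B_\delta U_i$ to contain $M_r$. That weaker, closure-level covering suffices because the desired inequality $\nm{\phi(g\cdot x)-\phi(x)}_\infty\ge r/3$ is a closed condition in $x$ once $\phi$ is continuous. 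To repair your proof you would need some substitute for this bounded-multiplicity/maximality argument; plain compactness of $X$ cannot supply it.
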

\begin{proof}
Since $\nm{\cdot}$ is normal, $g$ and $g^{-1}$ are uniformly continuous as self-maps of $(X, \dtp)$. So let $\delta < r/3$ and $\delta'$ be such that
\begin{gather*}
\forall x, y \in X \quad \dtp(x, y) < \delta \implies \dtp(g \cdot x, g \cdot y) < r/3 \\
\forall x, y \in X \quad \dtp(g \cdot x, g \cdot y) < \delta' \implies \dtp(x, y) < \delta.
\end{gather*}

Since $\nm{\cdot}$ is continuous and proper, one can find $g_1, \ldots,g_\ell$ such that $\bar{B}_{2r}$ is contained in $\bigcup_{i=1}^\ell g_i B_{\delta/2}$. By the pigeonhole principle, this implies that a ball of radius $2r$ in $(X, \dtp)$ cannot contain more than $\ell$ points which are pairwise at least $\delta$ apart. That is, for every $x, x_1, \ldots, x_{\ell +1} \in X$ such that $\partial(x,x_i) \leq 2r$ for all $i$, there are $i \neq j$ such that $\partial(x_i,x_j) < \delta$. Similarly, there is $k \in \N$ such that for all $x, x_1, \ldots, x_{k +1} \in X$ such that $\partial(x,x_i) \leq 2r$ for all $i$ there are $i \neq j$ such that $\partial(x_i,x_j) < \delta'$. Set $n = k+ \ell + 1$.

Set $M_r = \set{x : \dtp(g \cdot x, x) > r}$. We will construct open sets $U_1, \ldots, U_n \sub X$ with the following properties:
\begin{enumerate}
	\item \label{i:pf:stab:1} the closure of $\bigcup_i B_\delta U_i$ contains $M_r$;
	\item \label{i:pf:stab:2} $\dtp(U_i, U_j) \geq \delta$ for $i \neq j$;
	\item \label{i:pf:stab:3} $\dtp(g \cdot U_i, U_i) \geq r$ for all $i$.
\end{enumerate}

Once the construction is completed, we finish the proof as follows. We set
\begin{equation*}
\phi_i(x) = \min(\dtp(x, \cl{U_i}), r)
\end{equation*}
and $\phi = (\phi_i)_i$. By Theorem~\ref{th:char-distance}, $\phi$ is continuous. To see that $\phi$ satisfies the conclusion, in view of \ref{i:pf:stab:1} it is enough to see that $|| \phi(g \cdot x) - \phi(x) ||_{\infty} \geq r/3$ for every $x$ in $\bigcup_i B_\delta U_i$. So let $x \in B_\delta U_i$ and let $y \in U_i$ be such that $\dtp(x, y) < \delta$. Then $\dtp(g \cdot x, g \cdot y) < r/3$ and using Corollary~\ref{c:d-closure}, we obtain

\begin{equation*}
  \begin{split}
    \partial(g \cdot x, \cl{U_i}) &\geq \dtp(g \cdot y, \cl{U_i}) - \dtp(g \cdot y, g \cdot x) \\
    & \geq \dtp(g \cdot \cl{U_i}, \cl{U_i}) - r/3  \\
    & = \dtp(g \cdot U_i, U_i) - r/3 \geq 2r/3.
  \end{split}
\end{equation*}
So
\[ || \phi(g \cdot x) - \phi(x) ||_{\infty}  \geq \phi_i(g \cdot x) - \phi_i(x) \geq 2r/3 - \delta \geq r/3\]
and we are done.

Now we proceed with the construction. Using Zorn's lemma, we find a maximal (under inclusion) tuple of open sets $(U_i)$  satisfying \ref{i:pf:stab:2} and \ref{i:pf:stab:3} above. We will show that it must also satisfy \ref{i:pf:stab:1}. If not, there exists $x_0 \in M_r$ and an  open neighborhood $W_0$ of $x_0$ such that $\partial(W_0,U_i) \geq \delta$ for all $i$. By lower semi-continuity of $\dtp$, there is an open neighborhood $W_1$ of $x_0$ such that $\dtp(W_1, g \cdot W_1) \geq r$. Suppose that there exists $j \leq n$ such that
\begin{itemize}
	\item $\dtp(g \cdot x_0, U_j) > r$;
	\item $\dtp(x_0, g \cdot U_j) > r$.
\end{itemize}
Since both conditions are open, there exists an open neighborhood $W_2$ of $x_0$ such that $\dtp(W_2, g \cdot U_j) \geq r$, and $\dtp(g \cdot W_2, U_j) \geq r$. This implies that if we set $W = W_0 \cap W_1 \cap W_2$, we can add $W$ to $U_j$ without violating \ref{i:pf:stab:2} or \ref{i:pf:stab:3}, thus contradicting the maximality of $(U_i)$. So our final task in order to obtain a contradiction is to find $j$ satisfying the two conditions above. First, note that
\begin{equation*}
|\set{i : \dtp(g \cdot x_0, U_i) \leq r}| \leq \ell.
\end{equation*}
Indeed, suppose to the contrary that there exist $y_{i_0} \in U_{i_0}, \ldots, y_{i_\ell} \in U_{i_\ell}$ with $\dtp(y_{i_s}, g \cdot x_0) < 2r$ for all $s \leq \ell$. Then the $y_{i_s}$ are $\ell+1$ points in a ball of radius $2r$ which are pairwise $\delta$ apart by \ref{i:pf:stab:2}, which contradicts the definition of $\ell$.

Similarly,
\begin{equation*}
|\set{i : \dtp(x_0, g \cdot U_i) \leq r}| \leq k,
\end{equation*}
because if there exist $y_{i_0} \in U_{i_0}, \ldots, y_{i_k} \in U_{i_k}$ with $\dtp(x_0, g \cdot y_{i_s}) < 2r$ for all $s$, then by the choice of $k$, there exist $s \neq t$ with $\partial(g \cdot y_{i_s}, g \cdot y_{i_t}) < \delta'$. Now the choice of $\delta'$ implies that $\dtp(y_{i_s}, y_{i_t}) < \delta$, contradicting \ref{i:pf:stab:2}. Now by the choice of $n$, there exists $j$ as desired.
\end{proof}

\begin{proof}[Proof of Theorem \ref{thm-stab-cont-mhp}]
  It is a general fact that the stabilizer map is upper semi-continuous, so we only have to prove lower semi-continuity. So for every open subset $O$ of $G$, we have to prove that
  \[ X_{G,O} \coloneqq \left\lbrace x \in X : G_x \cap O \neq \emptyset \right\rbrace \]
 is an open subset of $X$. Clearly it is enough to do this for every relatively compact open subset $O$.  
	
	Let $L$ be the subgroup of $G$ generated by $O$. The subgroup $L$ is open, so the $L$-flow $X$ is also Gleason complete (by Corollary~\ref{cor-Gleason complete-localcondition}). Moreover, $L$ is compactly generated, so, in particular, $\sigma$-compact. Since $X_{G,O} = X_{L,O}$, it follows that it is enough to prove the desired conclusion under the assumption that the group is $\sigma$-compact. From now on, we make this assumption.
	
	We fix $x_0 \in X_{G, O}$. Let $g \in O$ be such that $g \cdot x_0 = x_0$ and let $V \ni 1_G$ be open such that $Vg \sub O$. Now we find a neighborhood $U_0$ of $x_0$ that is contained in $X_{G,Vg} \sub X_{G, O}$. Since $G$ is $\sigma$-compact, by Proposition~\ref{prop-exist-contin-pseudonorm}, there are $r > 0$ and a continuous, proper, normal pseudo-norm $|| \cdot||$  on $G$ such that $B_{2r} \subseteq V$. If $\phi \colon X \to \R^n$ is a continuous function as given by Lemma~\ref{l:construct-function}, then we have
    \[ \begin{aligned}  U_0 :=  \left\lbrace x \in X : || \phi(g \cdot x) - \phi(x) ||_{\infty} < r/3 \right\rbrace & \subseteq \left\lbrace x \in X : \partial(g \cdot x,x) \leq r  \right\rbrace \\
        &  \subseteq \left\lbrace x \in X : x \in B_{2r}  g \cdot x  \right\rbrace \\
        &  \subseteq X_{G,Vg}.
    \end{aligned}\]
    So $U_0$ is an open neighborhood of $x_0$ that has the desired property. 
  \end{proof}

  \begin{remark}
    \label{rem:metrizable-extension}
    If $G$ is second countable and the flow $G \actson X$ is metrizable, then it is also possible to obtain a metrizable irreducible extension $X'$ of $X$ for which the stabilizer map is continuous. For this, it is enough to apply Lemma~\ref{l:construct-function} to some fixed proper norm $\nm{\cdot}$ on $G$, a countable, dense subset of $g \in G$ and all rational numbers $r$ to obtain a countable collection of $\phi \in C(\hat X_G)$ that will witness the continuity of $\Stab$. Then one can take $X'$ to be the spectrum of the (separable) closed, $G$-invariant sublattice of $C(\hat X_G)$ generated by $C(X)$ and this countable collection and the proof of the theorem goes through. (The reason is that the functions $\phi$ that we construct are $1$-Lipschitz with respect to $\dtp$, so that if $\phi$ is a witness for some $g \in G$, then it is also a witness for all $g'$ sufficiently close to $g$ with constants in \eqref{eq:prop-phi} perhaps slightly worse than $r$ and $r/3$.)
  \end{remark}


\section{Stabilizer flows}
\label{sec:stabilizer-flows}

Throughout this section, let $G$ be a locally compact group. The continuity of the stabilizer map allows us to associate to any Gleason complete flow $X$ a subflow of $\Sub(G)$, namely, the image of the stabilizer map. As every flow has a unique universal irreducible extension, this leads us to the following definition.

\begin{defn}
  \label{df:stabilizer-flow}
    Let $G$ be locally compact and let $G \actson X$ be a $G$-flow. The \df{stabilizer flow $\tS_G(X)$} of $X$ is the subflow of $\Sub(G)$ given by
  \begin{equation*}
    \tS_G(X) \coloneqq \Stab(\hat X_G) = \set{G_z : z \in \hat X_G}.
  \end{equation*}
\end{defn}

We have the following general facts about the stabilizer flow.
\begin{prop}
  \label{p:stab-flow}
  Let $G \actson X$ be a $G$-flow and let $\pi \colon \hat X_G \to X$ be the universal irreducible extension of $X$. Then the following hold:
  \begin{enumerate}
  \item \label{i:psf:nwdense} For any compact $K \sub G$, the set
    \begin{equation*}
      D_K \coloneqq \set{z \in \hat X_G : z \notin K \cdot z \And \pi(z) \in K \cdot \pi(z)}
    \end{equation*}
    is nowhere dense in $\hat X_G$.
  \item \label{i:psf:in-TG} For any dense subset $X' \sub X$, we have that $\tS_G(X) \sub \cl{\Stab(X')}$.
  \item \label{i:psf:pt-cont} If $x \in X$ is a point of continuity of $\Stab$, then $G_x \in \tS_G(X)$.
  \item \label{i:pt-cont-dens} If the set $X_0 \sub X$ of continuity points of $\Stab$ is dense in $X$, then $\tS_G(X) = \cl{\Stab(X_0)}$.
  \end{enumerate}
\end{prop}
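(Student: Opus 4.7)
My plan is to prove \ref{i:psf:nwdense} first as the technical core, and to derive \ref{i:psf:in-TG}, \ref{i:psf:pt-cont}, \ref{i:pt-cont-dens} from it by Baire-category arguments combined with the continuity of the stabilizer map on $\hat X_G$ (Theorem~\ref{thm-stab-cont-mhp}). Throughout, for compact $K \sub G$ I will write $F_K := \set{x \in X : G_x \cap K \neq \emptyset}$ and $F'_K := \set{z \in \hat X_G : G_z \cap K \neq \emptyset}$. By upper semi-continuity of the stabilizer map on $X$, $F_K$ is closed; by Theorem~\ref{thm-stab-cont-mhp}, $F'_K$ is closed. Note that $D_K = \pi^{-1}(F_K) \setminus F'_K$.

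\textbf{Proof of \ref{i:psf:nwdense}.} Since $\partial F_K$ is nowhere dense in $X$ and $\pi$ is irreducible, $\pi^{-1}(\partial F_K)$ is nowhere dense in $\hat X_G$, so it suffices to prove that $D_K \cap \pi^{-1}(\mathrm{int}(F_K))$ is nowhere dense. If this failed, an open non-empty $U$ would lie inside it; then by irreducibility of $\pi$ the open set $V := \pi_*(U) \sub \mathrm{int}(F_K)$ is non-empty and satisfies $\pi^{-1}(V) \sub U$, so $\pi^{-1}(V) \cap F'_K = \emptyset$. The task thus reduces to showing: \emph{for every non-empty open $V \sub \mathrm{int}(F_K)$, $\pi^{-1}(V) \cap F'_K \neq \emptyset$.} I would tackle this by a Frol\'{\i}k-style argument adapted to the topometric setting. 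Given $x_0 \in V$ and $k_0 \in G_{x_0} \cap K$, the aim is to produce some $z \in \pi^{-1}(V)$ fixed by an element of $K$. The tools are Gleason completeness of $\hat X_G$ through Theorem~\ref{th:char-distance}\ref{item-car-loc} (that $V' \cl{O}$ is open whenever $V' \ni 1_G$ and $O \sub \hat X_G$ are open) and the continuity of the stabilizer map on $\hat X_G$. Intuitively, the absence of such a $z$ would mean that every point of $\pi^{-1}(V)$ is moved within its fiber by some element of $K$, which, combined with the topometric structure on Gleason complete flows, should force a contradiction. \emph{This is where I expect the main obstacle to lie.}

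\textbf{Deductions.} The remaining parts follow by Baire category. A basic Chabauty neighborhood in $\Sub(G)$ has the form $W = \set{H : H \cap K_0 = \emptyset} \cap \bigcap_{j=1}^{n} \set{H : H \cap U_j \neq \emptyset}$ with $K_0$ compact and $U_j$ open. For \ref{i:psf:in-TG}, pick $z \in \hat X_G$ and such a $W \ni G_z$. The set $U' := \set{w \in \hat X_G : G_w \in W}$ is open and contains $z$; by \ref{i:psf:nwdense}, $U' \setminus \cl{D_{K_0}}$ is non-empty open, and since $\pi^{-1}(X')$ is dense in $\hat X_G$ (because $X'$ is dense and $\pi$ is irreducible) one can choose $w$ in $(U' \setminus \cl{D_{K_0}}) \cap \pi^{-1}(X')$. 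The inclusion $G_w \sub G_{\pi(w)}$ ensures $G_{\pi(w)} \cap U_j \neq \emptyset$ for every $j$, while $w \notin D_{K_0}$ together with $G_w \cap K_0 = \emptyset$ forces $G_{\pi(w)} \cap K_0 = \emptyset$; hence $G_{\pi(w)} \in W$ with $\pi(w) \in X'$, proving $G_z \in \cl{\Stab(X')}$. For \ref{i:psf:pt-cont}, take a basic open $W \ni G_x$ and shrink each $U_j$ to a relatively compact open $U_j' \sub U_j$ with $\cl{U_j'} \cap G_x \neq \emptyset$; continuity of $\Stab$ at $x$ yields an open $V \ni x$ on which $G_y \cap \cl{U_j'} \neq \emptyset$ for all $j$ and $G_y \cap K_0 = \emptyset$. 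The set $\pi^{-1}(V) \setminus \bigcup_{j} \cl{D_{\cl{U_j'}}}$ is non-empty open by \ref{i:psf:nwdense} applied to each $\cl{U_j'}$, and any $w$ in it satisfies $G_w \cap U_j \neq \emptyset$ (since $w \notin D_{\cl{U_j'}}$ forces $G_w \cap \cl{U_j'} \neq \emptyset$) and $G_w \cap K_0 \sub G_{\pi(w)} \cap K_0 = \emptyset$, so $G_w \in W$. Since $W$ was arbitrary and $\tS_G(X) = \Stab(\hat X_G)$ is closed in $\Sub(G)$ (image of a compact space under a continuous map), $G_x \in \tS_G(X)$. Finally, \ref{i:pt-cont-dens} combines \ref{i:psf:in-TG} applied with $X' = X_0$ (yielding $\tS_G(X) \sub \cl{\Stab(X_0)}$) with \ref{i:psf:pt-cont} (yielding $\Stab(X_0) \sub \tS_G(X)$, hence $\cl{\Stab(X_0)} \sub \tS_G(X)$ by closedness).
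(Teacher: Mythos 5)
Your deductions of \ref{i:psf:in-TG}, \ref{i:psf:pt-cont} and \ref{i:pt-cont-dens} from \ref{i:psf:nwdense} are correct and essentially the same as the paper's (one small imprecision: in \ref{i:psf:pt-cont} you need $\cl{U_j'} \sub U_j$, not merely $U_j' \sub U_j$ relatively compact, so that $G_w \cap \cl{U_j'} \neq \emptyset$ implies $G_w \cap U_j \neq \emptyset$; this is easily arranged). The genuine gap is in \ref{i:psf:nwdense} itself: you reduce it to the claim that $\pi^{-1}(V) \cap F'_K \neq \emptyset$ for every non-empty open $V \sub \Int(F_K)$, and then explicitly leave that claim unproved, gesturing at a Frol\'{\i}k-style argument in the topometric setting. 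Since that claim carries the entire content of the item (your reduction is valid but essentially just reformulates it), the core of the proof is missing.

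The missing step is much easier than you anticipate and needs neither the topometric structure, nor Theorem~\ref{thm-stab-cont-mhp}, nor even Gleason completeness --- only irreducibility of $\pi$ and continuity of the action. The paper argues as follows: let $U \sub \hat X_G$ be non-empty open and let $z_0 \in U \cap D_K$ (if there is none, we are done). Since $z_0 \notin K \cdot z_0$ and $K$ is compact, the set $\set{(z,z') : z \notin K \cdot z'}$ is open and contains $(z_0,z_0)$, so there is an open $U' \ni z_0$ with $U' \sub U$ and $K \cdot U' \cap U' = \emptyset$. By irreducibility, $\pi_*(U')$ is non-empty open, and for $x \in \pi_*(U')$ the whole fiber $\pi^{-1}(x)$ lies in $U'$; if some $k \in K$ fixed $x$, it would map this fiber to itself, contradicting $K \cdot U' \cap U' = \emptyset$. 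Hence $\pi_*(U')$ is disjoint from $F_K$ and $\pi^{-1}(\pi_*(U'))$ is a non-empty open subset of $U$ disjoint from $D_K$. The same two lines close your reduced claim: if $\pi^{-1}(V) \cap F'_K = \emptyset$, every $z \in \pi^{-1}(V)$ has such a neighborhood $U' \sub \pi^{-1}(V)$, and then $\pi_*(U') \sub V$ is non-empty and disjoint from $F_K$, contradicting $V \sub F_K$. Note that your reduction pointed you toward producing a point of $F'_K$ above $V$ (lifting a fixed point), whereas the efficient move is the opposite: inside any open set meeting $D_K$, produce a whole fiber over a point \emph{outside} $F_K$.
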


\begin{proof}
  \ref{i:psf:nwdense} Let $U \sub \hat X_G$ be non-empty, open. We will find a non-empty, open subset of $U$ disjoint from $D_K$. Let $z_0 \in U \cap D_K$ (if there is no such $z_0$, we are done). The set $\set{(z, z') \in \hat X_G^2 : z \notin K \cdot z'}$ is open and $(z_0,z_0)$ belongs to it, so there exists a neighborhood $U'$ of $z_0$, $U' \sub U$, such that $K \cdot U' \cap U' = \emptyset$. By irreducibility of $\pi$, the set $\pi_*(U')$ is non-empty and for any $x \in \pi_*(U')$, we have that $x \notin K \cdot x$. Thus the open set $\pi^{-1}(\pi_*(U')) \sub U'$ is disjoint from $D_K$.

  \ref{i:psf:in-TG} Let $z_0 \in \hat X_G$ and let
  \begin{equation*}
    \cU = \set{H \in \Sub(G) : H \cap O_1 \neq \emptyset, \ldots, H \cap O_n \neq \emptyset, H \cap K = \emptyset},
  \end{equation*}
  where $O_1, \ldots, O_n \sub G$ are open and $K \sub G$ is compact, be a neighborhood of $G_{z_0}$ in $\Sub(G)$. Our goal is to find $x \in X'$ with $G_x \in \cU$. Let $U = \set{z \in \hat X_G : G_z \in \cU}$ and note that by continuity of the stabilizer map, $U$ is open. By \ref{i:psf:nwdense}, the open set $U \sminus \cl{D_K}$ is non-empty. We claim that any $x \in \pi_*(U \sminus \cl{D_K}) \cap X'$ works. Indeed, fix such an $x$ and let $z \in \hat X_G$ be such that $\pi(z) = x$. As $z \notin D_K$, we have that $G_x \cap K = \emptyset$ and as $G_z \leq G_x$, we also have that $G_x \cap O_i \neq \emptyset$ for all $i$, so $G_x \in \cU$.

  \ref{i:psf:pt-cont} Let $x$ be a point of continuity of $\Stab$ and let
  \begin{equation*}
    \cU \coloneqq \set{H \in \Sub(G) : H \cap O_1 \neq \emptyset, \ldots, H \cap O_n \neq \emptyset, H \cap K = \emptyset}
  \end{equation*}
  be a neighborhood of $G_{x}$, where each $O_i \sub G$ is open and $K \sub G$ is compact. Let $O_i' \sub G$ be open, relatively compact with $\cl{O_i'} \sub O_i$ such that $G_{x} \in \cU'$, where
  \begin{equation*}
    \cU' \coloneqq \set{H \in \Sub(G) : H \cap O_1' \neq \emptyset, \ldots, H \cap O_n' \neq \emptyset, H \cap K = \emptyset}.
  \end{equation*}
  By the continuity of $\Stab$ at $x$, there is an open $W \ni x$ with $\Stab(W) \sub \cU'$. By \ref{i:psf:nwdense}, the set $\bigcup_i D_{\cl{O'_i}}$ is nowhere dense, so there exists $z \in \pi^{-1}(W) \sminus \bigcup_i D_{\cl{O'_i}}$. Then $z \in \cl{O_i'} \cdot z \sub O_i \cdot z$ for every $i$ and $G_z \cap K = \emptyset$ (because $\pi(z) \in W$ and $G_z \leq G_{\pi(z)}$). Thus $G_z \in \cU$. As $\cU$ was arbitrary and $\tS_G(X)$ is closed, this implies that $G_x \in \tS_G(X)$.
  
  \ref{i:pt-cont-dens} follows from \ref{i:psf:in-TG} and \ref{i:psf:pt-cont}.
\end{proof}

The following is well-known and follows from \cite[Theorem~VII]{Kuratowski1928}. We include a short proof for completeness. 

\begin{lemma} \label{lem-set-pts-cont}
Let $X$ be a compact space, $Y$ a locally compact space, and let $\varphi \colon X \to 2^Y$ be upper semi-continuous. Let $(U_i)_{i \in I}$ be a basis for the topology on $Y$ such that each $U_i$ is relatively compact. For $i \in I$, we let \[X_i = \left\lbrace x \in X : \phi(x) \cap \cl{U_i} \neq \emptyset \right\rbrace .\] Then $\phi$ is continuous at each point of the set $\bigcap_i (X \setminus \partial X_i)$. 

In particular, if $Y$ is second countable, then the set of continuity points of $\phi$ is comeager.
\end{lemma}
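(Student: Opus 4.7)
The plan is to prove continuity at a point $x_0 \in \bigcap_i(X \sminus \partial X_i)$ by using upper semi-continuity globally, and then deducing lower semi-continuity at $x_0$ from the fact that $x_0$ lies in the interior (rather than the boundary) of each relevant $X_i$.

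First I would observe that each $X_i$ is closed in $X$. Indeed, since $U_i$ is relatively compact, $\cl{U_i}$ is compact, so the Chabauty-subbasic set $O_{\cl{U_i}} \sub 2^Y$ is open. Upper semi-continuity of $\phi$ then says that $\phi^{-1}(O_{\cl{U_i}}) = X \sminus X_i$ is open, as desired.

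Next I would check lower semi-continuity at $x_0$. Let $U \sub Y$ be open with $\phi(x_0) \cap U \neq \emptyset$, and pick $y \in \phi(x_0) \cap U$. Using local compactness of $Y$ together with the fact that the $U_i$ form a basis of relatively compact opens, one can find $i \in I$ such that $y \in U_i$ and $\cl{U_i} \sub U$. Then $\phi(x_0) \cap \cl{U_i} \neq \emptyset$, so $x_0 \in X_i$; combined with $x_0 \notin \partial X_i$ and the fact that $X_i$ is closed, this forces $x_0 \in \Int X_i$. Every point $x$ in the open neighborhood $\Int X_i$ of $x_0$ satisfies $\phi(x) \cap \cl{U_i} \neq \emptyset$, and hence $\phi(x) \cap U \neq \emptyset$. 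This establishes lower semi-continuity of $\phi$ at $x_0$, and thus, together with upper semi-continuity, continuity.

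For the final assertion, if $Y$ is second countable (and locally compact), I would choose $(U_i)_{i \in I}$ to be a countable basis of relatively compact open sets. Each $\partial X_i$, being the boundary of a closed set, is closed with empty interior, hence nowhere dense. Therefore $\bigcup_i \partial X_i$ is meager, and its complement $\bigcap_i(X \sminus \partial X_i)$ is comeager. I do not anticipate any real obstacle here: the argument is essentially the standard observation that an upper semi-continuous set-valued map into a nice enough space fails to be continuous only on a small set. The only mild subtlety is that the sandwiching $y \in U_i \sub \cl{U_i} \sub U$ genuinely uses both local compactness of $Y$ and the hypothesis that each basic $U_i$ is relatively compact.
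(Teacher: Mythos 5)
Your proof is correct and follows essentially the same approach as the paper: both hinge on the sets $X_i$ being closed by upper semi-continuity and on the key observation that $x_0 \in X_i$ together with $x_0 \notin \partial X_i$ forces $x_0 \in \Int X_i$, so that nearby points also have image meeting $\cl{U_i}$. The only cosmetic difference is that you verify lower semi-continuity directly through the subbasic sets $O^U$, whereas the paper phrases the same argument in terms of convergent nets in the compact space $2^Y$.
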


\begin{proof}
Let $x \in \bigcap_i (X \setminus \partial X_i)$, and let $(x_a)$ be a net in $X$ converging to $x$ and such that $(\phi(x_a))$ converges to $F$. By upper semi-continuity, we know that $F \sub \phi(x)$, and we want to prove equality. Let $i$ such that $\phi(x) \cap \cl{U_i} \neq \emptyset$, i.e., $x \in X_i$. Since $x$ is in $X \setminus \partial X_i$ by assumption, $x$ must be in the interior of $X_i$. Since $(x_a)$ converges to $x$, eventually $x_a \in X_i$, that is, $\phi(x_a) \cap \cl{U_i} \neq \emptyset$. Since  $ \cl{U_i}$ is compact, this implies $F \cap \cl{U_i} \neq \emptyset$. So whenever $\phi(x)$ intersects $\cl{U_i}$, so does $F$. Since $(U_i)_{i \in I}$ is a basis for the topology on $Y$, this shows that $ \phi(x) \sub F$, as desired.

Note that $X_i$ is always closed by upper semi-continuity, so  $X \setminus \partial X_i$ is a dense open subset. In case $Y$ is second countable, $(U_i)_{i \in I}$ can be chosen to be countable, and hence the domain of continuity of  $\phi$ is comeager. 
\end{proof}

\begin{cor}
  \label{c:second-countable}
  Let $G$ be second countable and let $G \actson X$ be a $G$-flow. Then the set $X_0 \sub X$ of continuity points of $\Stab$ is dense $G_\delta$ in $X$ and we have
  \begin{equation*}
    \tS_G(X) = \cl{\Stab(X_0)}.
  \end{equation*}
\end{cor}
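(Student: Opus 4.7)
The plan is to combine the two preceding results directly: Lemma~\ref{lem-set-pts-cont} applied to the stabilizer map, together with Proposition~\ref{p:stab-flow}\ref{i:pt-cont-dens}.

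First, I would observe that since $G$ is locally compact and second countable, $G$ is metrizable, and hence the compact space $\Sub(G)$ (closed in $2^G$) is metrizable in the Chabauty topology. In particular, one can choose a \emph{countable} basis $(U_i)_{i \in \N}$ of relatively compact open subsets of $G$. Applying Lemma~\ref{lem-set-pts-cont} to the upper semi-continuous map $\Stab \colon X \to \Sub(G)$ with this basis, the set
\[
\bigcap_{i \in \N} (X \setminus \partial X_i), \qquad X_i = \set{x \in X : G_x \cap \cl{U_i} \neq \emptyset},
\]
is a dense $G_\delta$ subset of $X$ consisting of continuity points of $\Stab$. Since $\Sub(G)$ is metrizable, the set $X_0$ of \emph{all} continuity points of the map $\Stab$ is itself a $G_\delta$ subset of $X$ (this is a standard fact: for a map into a metric space, $X_0$ equals the intersection over $n \geq 1$ of the open sets $\set{x : \mathrm{osc}(\Stab)(x) < 1/n}$). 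Combining these two facts shows that $X_0$ is dense $G_\delta$ in $X$.

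Once density of $X_0$ is established, the second assertion $\tS_G(X) = \cl{\Stab(X_0)}$ follows immediately from Proposition~\ref{p:stab-flow}\ref{i:pt-cont-dens}, which states exactly this implication.

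There is no substantial obstacle here; the only points to verify carefully are the second countability of $\Sub(G)$ (which gives the countable basis needed to apply Lemma~\ref{lem-set-pts-cont}) and the standard observation that continuity points of a function into a metric space form a $G_\delta$ set, so that the dense $G_\delta$ set produced by Lemma~\ref{lem-set-pts-cont} is in fact contained in $X_0$, forcing $X_0$ itself to be dense $G_\delta$.
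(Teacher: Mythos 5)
Your proof is correct and follows essentially the same route as the paper: apply Lemma~\ref{lem-set-pts-cont} to the upper semi-continuous map $\Stab$ (using second countability of $G$ to get a countable basis) and then invoke Proposition~\ref{p:stab-flow}\ref{i:pt-cont-dens}. The extra care you take in noting that $\Sub(G)$ is metrizable, so that the set of \emph{all} continuity points is itself $G_\delta$ via the oscillation function, is a worthwhile refinement of a point the paper leaves implicit.
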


\begin{proof}
  The first claim follows from the upper semi-continuity of the stabilizer map and Lemma~\ref{lem-set-pts-cont}, and the second claim follows from \ref{i:pt-cont-dens} of Proposition~\ref{p:stab-flow}.
\end{proof}

\begin{remark}
When $G$ is not second countable, it is no longer true that there exists $x \in X$ such that $G_x \in \tS_G(X)$. Indeed, consider the group $G = \mathrm{SO}(3, \R)$, equipped with the discrete topology, acting on the $2$-dimensional sphere $X = \mathbf{S}^2$. Then $G_x \neq \set{1_G}$ for all $x \in X$. On the other hand, every non-identity element has only two fixed points in $X$, so the action is topologically free, which means that $\tS_G(X) = \set[\big]{\set{1_G}}$ (see Corollary   \ref{c:top-free}). Here the set of continuity points of $\Stab$ is empty.
\end{remark}

In the case where $X$ is minimal, stabilizer flows have already been considered in the literature under the name of stabilizer URSs. Recall that a \df{uniformly recurrent subgroup (URS)} of $G$ is a minimal subflow of $\Sub(G)$. Glasner and Weiss~\cite{Glasner2015} associated to every minimal $G$-flow its \df{stabilizer URS} as follows. Upper semi-continuity of the stabilizer map implies that $\cl{\Stab(X)}$ has a unique minimal subflow (see \cite[Lemma~1.1]{Auslander1977} or \cite[Proposition~1.2]{Glasner2015}). Then the \df{stabilizer URS of $X$} is simply defined to be this minimal subflow. Proposition~\ref{p:stab-flow} implies that for minimal flows, our definition and theirs coincide.

\begin{cor}
  \label{c:stab-flow-equal-URS}
  Let $X$ be a minimal $G$-flow. Then its stabilizer URS is equal to $\tS_G(X)$.
\end{cor}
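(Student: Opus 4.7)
The plan is to verify that $\tS_G(X)$ satisfies the two defining properties of the stabilizer URS of $X$: it is a minimal subflow of $\Sub(G)$, and it is contained in $\cl{\Stab(X)}$. Since the stabilizer URS is, by its very definition, the \emph{unique} minimal subflow of $\cl{\Stab(X)}$, this equality of subflows will follow at once.

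First, I would observe that since $X$ is minimal and $\pi \colon \hat X_G \to X$ is an (irreducible, hence surjective) $G$-map, the flow $\hat X_G$ is itself minimal. By Theorem~\ref{thm-stab-cont-mhp}, the stabilizer map $\Stab \colon \hat X_G \to \Sub(G)$ is continuous, and it is $G$-equivariant. Therefore $\tS_G(X) = \Stab(\hat X_G)$ is the continuous, equivariant image of a minimal flow, hence it is a minimal subflow of $\Sub(G)$.

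Second, to see that $\tS_G(X) \sub \cl{\Stab(X)}$, I would directly invoke Proposition~\ref{p:stab-flow}\ref{i:psf:in-TG} with the choice $X' = X$ (which is trivially dense in itself). Combining the two points, $\tS_G(X)$ is a minimal subflow contained in $\cl{\Stab(X)}$, and uniqueness of the minimal subflow of $\cl{\Stab(X)}$ \cite[Lemma~1.1]{Auslander1977} forces $\tS_G(X)$ to coincide with the stabilizer URS of $X$. There is no real obstacle here: the entire content of the corollary has been packaged into Theorem~\ref{thm-stab-cont-mhp} (for minimality of the image) and Proposition~\ref{p:stab-flow} (for containment in $\cl{\Stab(X)}$), so this is essentially a bookkeeping argument.
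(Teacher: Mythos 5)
Your proof is correct and follows essentially the same route as the paper's: containment via Proposition~\ref{p:stab-flow}\ref{i:psf:in-TG}, minimality of $\tS_G(X)$ as a factor of the minimal flow $\hat X_G$, and uniqueness of the minimal subflow of $\cl{\Stab(X)}$. One small caution: minimality of $\hat X_G$ comes from the \emph{irreducibility} of $\pi$ (the image of a proper closed invariant subset would be a proper closed invariant subset of $X$), not from mere surjectivity, so the parenthetical ``irreducible, hence surjective'' should not be read as reducing the argument to surjectivity.
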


\begin{proof}
  Proposition~\ref{p:stab-flow} \ref{i:psf:in-TG} tells us that $\tS_G(X) \sub \cl{\Stab(X)}$. As $X$ is minimal, $\hat X_G$ is also minimal and so is its factor $\tS_G(X)$. Now the conclusion follows from the fact that $\cl{\Stab(X)}$ has a unique minimal subflow.
\end{proof}
Corollary~\ref{c:second-countable} was also known for minimal $X$: see \cite{Glasner2015}*{Proposition~1.2}.

Recall that a flow $G \actson X$ is called \df{topologically free} if for every compact $K \sub G$ that does not contain $1_G$, the closed set $\set{x \in X : x \in K \cdot x}$ has empty interior. A point $x \in X$ is called \df{free} if the orbit map $G \to G \cdot x$, $g \mapsto g \cdot x$ is injective. A flow is called \df{free} if all points are free. It is clear that a flow for which the free points are dense is topologically free, and a simple Baire category argument shows that the converse is also true if $G$ is second countable. 

\begin{cor}
  \label{c:top-free}
  Let $G \actson X$ be a $G$-flow. Then the following are equivalent:
  \begin{enumerate}
  \item \label{i:ctf:X-top-free} $X$ is topologically free;
  \item \label{i:ctf:Gleason complete-free} $\hat X_G$ is free;
  \item \label{i:ctf:SGX-trivial} $\tS_G(X) = \set[\big]{\set{1_G}}$.
  \end{enumerate}
  In particular, topologically free Gleason complete flows are free.
\end{cor}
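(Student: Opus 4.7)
The equivalence (ii) $\Leftrightarrow$ (iii) is immediate from Definition~\ref{df:stabilizer-flow}, which gives $\tS_G(X) = \set{G_z : z \in \hat X_G}$; so $\hat X_G$ is free iff this set equals $\set[\big]{\set{1_G}}$. The ``in particular'' clause then follows by specializing (i) $\Leftrightarrow$ (ii) to a Gleason complete flow, for which $\hat X_G = X$. So the substantive task is to prove (i) $\Leftrightarrow$ (ii); I will let $\pi \colon \hat X_G \to X$ denote the universal irreducible extension throughout.

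For (ii) $\Rightarrow$ (i), I would argue by contradiction using Proposition~\ref{p:stab-flow}\ref{i:psf:nwdense}. Suppose $\hat X_G$ is free, yet for some compact $K \sub G$ with $1_G \notin K$, there is a non-empty open $U \sub \set{x \in X : x \in K \cdot x}$. The set $D_K$ is nowhere dense in $\hat X_G$, so I can choose $z \in \pi^{-1}(U) \sminus \cl{D_K}$. Since $\pi(z) \in K \cdot \pi(z)$ and $z \notin D_K$, the definition of $D_K$ forces $z \in K \cdot z$, i.e., some $k \in K$ belongs to $G_z$. But $G_z = \set{1_G}$ and $1_G \notin K$, a contradiction.

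For (i) $\Rightarrow$ (ii), I would use the continuity of the stabilizer map on $\hat X_G$ (Theorem~\ref{thm-stab-cont-mhp}) together with the irreducibility of $\pi$. Suppose some $z_0 \in \hat X_G$ satisfies $G_{z_0} \neq \set{1_G}$, pick $g \in G_{z_0}$ with $g \neq 1_G$, and choose an open neighborhood $O$ of $g$ whose closure is compact and avoids $1_G$. By Theorem~\ref{thm-stab-cont-mhp} and lower semi-continuity, the set $U \coloneqq \set{z \in \hat X_G : G_z \cap O \neq \emptyset}$ is open and contains $z_0$. By irreducibility, the fiber image $\pi_*(U)$ is non-empty open in $X$; and for every $x \in \pi_*(U)$ and $z \in \pi^{-1}(x) \sub U$ we have $G_z \cap O \neq \emptyset$, hence $G_x \cap O \neq \emptyset$ since $G_z \leq G_x$. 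This yields $\pi_*(U) \sub \set{x \in X : x \in \cl O \cdot x}$, contradicting topological freeness of $X$ (with the compact set $\cl O$).

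The crux of both non-trivial implications is the asymmetry $G_z \leq G_{\pi(z)}$: stabilizers can shrink strictly when passing to the irreducible extension. In the direction (i) $\Rightarrow$ (ii), this asymmetry is harmless because the condition ``$G_{\cdot} \cap O \neq \emptyset$'' is preserved upward under $\pi$; the real work is the transfer from $\hat X_G$ down to $X$ via $\pi_*$, which is possible only because Theorem~\ref{thm-stab-cont-mhp} makes $U$ open. In the direction (ii) $\Rightarrow$ (i), one must conversely control the shrinkage when lifting from $X$ to $\hat X_G$, and this is exactly the content of Proposition~\ref{p:stab-flow}\ref{i:psf:nwdense}. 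Beyond correctly matching the open and compact sets in these two transfer arguments, I do not expect further technical obstacles.
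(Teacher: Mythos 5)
Your proposal is correct and follows essentially the same route as the paper: the (ii) $\Rightarrow$ (i) direction is identical (Proposition~\ref{p:stab-flow}\ref{i:psf:nwdense} applied to a point of $\pi^{-1}(U)$ avoiding $D_K$), and your (i) $\Rightarrow$ (ii) is just the contrapositive packaging of the paper's argument, which first transfers topological freeness up to $\hat X_G$ via irreducibility and then uses Theorem~\ref{thm-stab-cont-mhp} to conclude that the open set $\set{z : z \in V \cdot z}$ is empty, whereas you push the open set $U$ down to $X$ via $\pi_*$ to contradict topological freeness there. The two uses of irreducibility and of the continuity of the stabilizer map are the same; no gaps.
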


\begin{proof}
  The equivalence of \ref{i:ctf:Gleason complete-free} and \ref{i:ctf:SGX-trivial} follows from the definition of $\tS_G(X)$.

  \ref{i:ctf:X-top-free} $\Rightarrow$ \ref{i:ctf:Gleason complete-free} 
  Since $\hat X_G \to X$ is irreducible, the assumption that $X$ is topologically free implies that $\hat X_G$ is also topologically free. Let $g \in G$, $g \neq 1_G$. Let $V \sub G$ be an open, relatively compact subset with $g \in V$ and $1_G \notin \cl{V}$. Then the set $\set{z \in \hat X_G : z \in V \cdot z}$ is open by Theorem~\ref{thm-stab-cont-mhp}, and has empty interior by topological freeness, so it must be empty. So we conclude that $g \cdot z \neq z$ for all $z$.

  \ref{i:ctf:Gleason complete-free} $\Rightarrow$ \ref{i:ctf:X-top-free} Suppose, towards a contradiction, that there is a compact $K \sub G$ with $1_G \notin K$ such that the set $\set{x \in X : x \in K \cdot x}$ has non-empty interior $W$. By Proposition~\ref{p:stab-flow} \ref{i:psf:nwdense}, the set $\pi^{-1}(W) \sminus D_K$ is non-empty and for any $z$ in this set, we have that $z \in K \cdot z$, contradicting the freeness of $\hat X_G$.
\end{proof}

From this, it is not hard to deduce a well-known theorem of Veech.
\begin{cor}[Veech]
  \label{c:Veech}
  Every locally compact group admits a free flow.
\end{cor}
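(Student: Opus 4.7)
My plan is to combine Corollary~\ref{c:top-free} with the existence of a natural topologically free $G$-flow. Concretely, I would take $X = \Sam(G)$, the greatest ambit of $G$ (the Samuel compactification of $G$ endowed with its canonical left translation action). For any locally compact group, this is a well-defined $G$-flow, and the canonical map $G \to \Sam(G)$, $g \mapsto g \cdot \ast$ (where $\ast$ denotes the image of $1_G$), is a dense embedding; in particular the orbit of $\ast$ is dense in $\Sam(G)$ and is free, since $G \actson G$ by left translations is free.

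The key observation I would use is that any $G$-flow containing a dense free orbit is automatically topologically free. Indeed, given a compact $K \sub G$ with $1_G \notin K$, if the closed set $\set{x \in X : x \in K \cdot x}$ had non-empty interior, it would meet the dense free orbit, producing $k \in K$ and $g \in G$ with $k \cdot (g \cdot \ast) = g \cdot \ast$; by freeness of the orbit of $\ast$ this forces $k = 1_G$, a contradiction. Hence $\Sam(G)$ is topologically free.

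Corollary~\ref{c:top-free} then yields that $\widehat{\Sam(G)}_G$ is a free $G$-flow, which completes the proof. No serious obstacle arises: the only nontrivial input is the existence of $\Sam(G)$ together with the basic properties used above, which is classical for locally compact groups; one could equally well argue with any other compactification of $G$ (with its left translation action) into which $G$ embeds as a dense orbit.
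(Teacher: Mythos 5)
Your proof is correct and follows essentially the same route as the paper: both take the Samuel compactification $\Sam(G)$ with its dense free orbit and feed it into Corollary~\ref{c:top-free}. The only (harmless) difference is that the paper additionally invokes the fact that $\Sam(G)$ is Gleason complete to conclude that $\Sam(G)$ itself is free, whereas you pass to the universal irreducible extension $\widehat{\Sam(G)}_G$ and thereby avoid needing that extra input.
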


\begin{proof}
  Let $G$ be a locally compact group and let $\Sam(G)$ denote its \df{Samuel compactification}, i.e., the spectrum of the Riesz space of right uniformly continuous bounded functions on $G$. Then $G \actson \Sam(G)$ is a $G$-flow and $G$ embeds densely in $\Sam(G)$ as point evaluations. Also, the flow $\Sam(G)$ is Gleason complete by \cite{Zucker2021}*{3.2.1} (alternatively, it is not difficult to verify condition \ref{item-cont-dist} of Theorem~\ref{th:char-distance}). As the left translation action $G \actson G$ is free, Corollary~\ref{c:top-free} tells us that the flow $\Sam(G)$ is also free.
\end{proof}

\bibliography{stab-irreducible-ext}
\end{document}